\newtheorem{thm}{Theorem}
\newtheorem{lem}[thm]{Lemma}
\newtheorem{cor}[thm]{Corollary}
\theoremstyle{remark}
\theoremstyle{definition}
\theoremstyle{remark}
\newtheorem{rem}[thm]{Remark}
\numberwithin{equation}{section}
\numberwithin{thm}{section}
\newcommand{\eps}{\varepsilon}
\newcommand{\dt}{\delta}
\newcommand{\R}{\mathbb{R}}
\newcommand{\cI}{\mathcal{I}}
\newcommand{\cL}{\mathcal{L}}
\newcommand{\DD}{\mathcal{D}}
\newcommand{\be}{\begin{equation}} 
\newcommand{\ee}{\end{equation}}
\newcommand{\bea}{\begin{eqnarray}} 
\newcommand{\eea}{\end{eqnarray}}
\newcommand{\bean}{\begin{eqnarray*}} 
\newcommand{\eean}{\end{eqnarray*}}
\newcommand{\rf}[1]{(\ref {#1})} 
\def\dx{\,{\rm d}x}
\def\dy{\,{\rm d}y}
\def\dt{\,{\rm d}t}
\def\dr{\,{\rm d}r}
\title[A diffusive aggregation model]{Concentration phenomena \\ in a diffusive aggregation model}
 \author[P. Biler]{Piotr Biler}
\address[P. Biler]{Instytut Matematyczny, Uniwersytet Wroc{\l}awski, pl. Grunwaldzki 2/4, 50-384  Wroc{\l}aw, Poland}
\email{piotr.biler@uwr.edu.pl}
 \author[A. Boritchev]{Alexandre Boritchev}
 \address[A. Boritchev]{Universit\'e Claude Bernard -- Lyon 1, CNRS UMR 5208, Institut Camille Jordan, F-69622 Villeurbanne, France}
\email{alexandre.boritchev@gmail.com}
 \author[G. Karch]{Grzegorz Karch}
\address[G. Karch]{Instytut Matematyczny, Uniwersytet Wroc{\l}awski, pl. Grunwaldzki 2/4, 50-384 Wroc{\l}aw, Poland}
\email{grzegorz.karch@uwr.edu.pl}
 \author[Ph. Lauren{\c c}ot]{Philippe Lauren{\c c}ot}
 \address[Ph. Lauren{\c c}ot]{Institut de Math\'ematiques de Toulouse, UMR 5219, Universit\'e de Toulouse, CNRS, F-31062, Toulouse Cedex 9,
 France 
}
\email{Philippe.Laurencot@math.univ-toulouse.fr}
\begin{document}

\begin{abstract} 
We consider the  drift-diffusion equation
$u_t-\eps\Delta u+\nabla\cdot(u\nabla K\ast u)=0$
 in the whole space with global-in-time bounded solutions. 
Mass concentration phenomena for radially symmetric solutions of  this equation  with small diffusivity  are studied.  
% Under suitable assumptions on the singularity of the interaction kernel $K$, we show that for small diffusion, $\varepsilon$-neighborhoods of the origin carry an $\varepsilon$-uniform portion of the mass.
\end{abstract}

\keywords{nonlocal drift-diffusion equation; small diffusivity; concentration of solutions}

\subjclass[2010]{35Q92; 35K55; 35B36; 35B45}

\date{\today}
%%%%%%%%%%%%%%%%%%%%%%%%%%%%%%%%%%%%%%
\maketitle

\baselineskip=15pt

\section{Introduction and motivations} \label{sec:int}
The nonlocal nonlinear evolution model 
$$
u_t-\eps\Delta u+\nabla\cdot(u\nabla K\ast u)=0, \qquad x\in \mathbb{R}^N, \;t>0, 
$$
 describes pairwise interaction of particles with the density $u=u(x,t)$ through the convolution with a kernel $K:\R^N\to \R$. These particles are also subject to the linear  diffusion represented by the Laplacian, with  the diffusion coefficient  $\eps> 0$.  
Such models appear in various physical and biological settings. 
For instance, Astrophysics is a source of mean-field models of gravitationally attracting particles going back to the famous Chandrasekhar equation for the equilibrium of radiating stars, see \cite{Cha,CSR}. 
Another source of related models is Mathematical Biology where chemotaxis (haptotaxis, angiogenesis, etc.)~phenomena for populations of either cells or (micro)organisms are described by various modifications of the Keller--Segel systems, see e.g.~\cite{HP09}. 

We supplement this equation with a nonnegative, radial, bounded and integrable  initial condition, and we assume that $\nabla K\in L^\infty(\mathbb{R}^N)$ which is a sufficient assumption  to guarantee that the corresponding Cauchy problem has a global-in-time regular solution, uniformly bounded for $t>0$. 
The main result reported in this work states that under a suitable assumption on the singularity of the kernel $K$ and for a small diffusion coefficient $\varepsilon>0$, one observes concentration phenomena of such global-in-time  solutions, namely, $\varepsilon$-small neighbourhoods of the origin carry an $\varepsilon$-uniform portion of the total mass. 
Results  reported in this work show what kind of concentration of solutions  can be expected in ``correct'' mathematical models (i.e.~those without finite time blowups) and how to detect them in experiments with  observations accumulated over a sufficiently long time interval.  

Our proofs are based on a new general methodology stemming from considerations  in \cite{KS10,B16ab,BKL09}.  
Note also that our study allows us to obtain estimates of small scale quantities such as structure functions which are relevant for hydrodynamical turbulence. 
A similar scheme has been studied in the context of the large scale evolution of  the Universe modeled by a multidimensional analogue of the classical Burgers equation 
 $u_t-\eps \Delta u+(u\cdot\nabla)u=0$.
In that context, a link to the phenomenological/formal Kolmogorov K41 theory of turbulence has been made in \cite{Bir01} and in the papers of the second author \cite{B14,B16ab,B18}  culminating in  the study of the asymptotic behaviour of solutions of arbitrary size in the vanishing diffusion limit in terms of various functional norms.

\section{Main results} \label{sec:mr}
Let us now state the results of our work in detail. We consider the Cauchy problem  
\begin{align}
u_t-\eps\Delta u+\nabla\cdot(u\nabla K\ast u)&=0, &&x\in\mathbb{R}^N,\; t>0,\label{agg-CP} \\
u(x,0)&=u_0(x),&&x\in\mathbb{R}^N,\label{i-CP}
\end{align}
with an initial condition  satisfying 
 \begin{equation}
 u_0\in L^1(\mathbb{R}^N)\cap L^{\infty}(\mathbb{R}^N), \quad u_0\ge 0, \quad M \equiv \int_{\mathbb{R}^N} u_0(x)\dx>0, \label{PhL10}
 \end{equation} 
and with a  constant diffusion coefficient $\eps>0$.  
 The interaction kernel $K=K(x)$ in equation \rf{agg-CP} is a radially symmetric  function such that
\be 
 K(x)=k(|x|)\ \ {\rm with}\ \ k\in{\mathcal C}^1(0,\infty),\ \ k' \in L^\infty(0,\infty),\label{KN1} 
\ee
 and  
\be
\begin{split}
 \kappa_\Lambda  \equiv  
- &\sup_{s\in (0,\Lambda)} k'(s)\in (0,\infty)\quad \text{for  each}\quad \Lambda>0, \quad\\ &\text{with} \quad  \kappa_0 \equiv  
\lim_{\Lambda\searrow 0}\kappa_\Lambda\in(0,\infty). \label{KN2}
\end{split}
\ee 
For $N=1$, we actually need something stronger than $k' \in L^\infty(0,\infty)$, namely 
\begin{equation} \label{KN3}
k'' \in L^1(0,\infty).
\end{equation} 
The functions $K(x)=-|x|$ and  $K(x)= {\mathrm e}^{-|x|}$  are our basic examples of  interaction kernels often used in applications, see  e.g.~\cite{BCL09}. 

Such kernels $K$ are {\em mildly singular}; that is, solutions of the Cauchy problem~\eqref{agg-CP}--\eqref{i-CP}  are global-in-time (see Remark \ref{rem1} below), even though interactions are strong enough to trigger finite time blowup in the  diffusion-free case $\eps=0$, see, e.g.,~\cite{BCL09, BLR11, BGL12} and the references therein.   

\begin{rem}\label{rem1}
It is rather standard to show that under  assumption \eqref{KN1}
(implying 
$\nabla K\in L^\infty(\R^N)$)
 and for each initial condition $u_0$ satisfying~\eqref{PhL10},
problem~\rf{agg-CP}--\rf{i-CP} 
  has a unique, nonnegative,   smooth, global-in-time solution which satisfies the sign conservation property: $u(x,t)\ge 0$, and the mass conservation property 
\be
\int_{\R^N}u(x,t)\dx=\int_{\R^N}u_0(x)\dx = M
\qquad \text{for all} \quad t\geq 0.
\label{m-cons}
\ee
This solution decays sufficiently fast for large values of $|x|$ such that  integration by parts in the following sections is fully justified.
By the uniqueness, this solution is radial in $x$ if the corresponding initial condition is.
The construction of such solutions is usually performed in the framework of mild ones, and their regularity is shown afterwards up to the classical smoothness.
More generally, according to \cite{KS11}, if $\nabla  K \in L^p(\R^N)$ with some $p\in (N,\infty]$, then problem~\eqref{agg-CP}--\eqref{i-CP} has a unique, regular, nonnegative, radial,  global-in-time solution for every nonnegative radial initial condition $u_0\in L^1(\R^N)\cap L^q(\R^N)$ for some $q>1$. 
On the other hand, it is well-known that for more singular kernels, for example when $K=-E_N$ is the fundamental solution of the Laplacian in $\R^N$  with $N\geq 2$
as in the parabolic-elliptic Keller-Segel model of chemotaxis, some solutions blow up in a finite time.
For a classification of kernels based on the property whether all solutions of equation \rf{agg-CP} are global-in-time or they can blow up in a finite time, see \cite{BW99, BRB11,  KS11,LS18}.
\end{rem}

\begin{rem}\label{rem2} 
Solutions to  the Cauchy problem~\eqref{agg-CP}--\eqref{i-CP} with $\varepsilon >0$ and under assumptions~\eqref{PhL10}--\eqref{KN1}  are not only global-in-time but also uniformly bounded in time (but not in $\varepsilon$). Indeed, we prove below in Lemma 
\ref{Lpupper} that $\sup_{t>0} \|u(t)\|_p<\infty$ for each $p\in [1,\infty)$.
This bound for the $L^\infty$-norm is obtained immediately from estimates of the Sobolev norms in Lemma \ref{H1upper} in the case  $N=1$. Such an estimate for the $L^\infty$-norm in  higher dimensional case can be shown in an analogous way and we shall discuss it in the forthcoming paper \cite{BBKL20}.
\end{rem}

Our goal is to study the behaviour of the family of solutions to problem~\eqref{agg-CP}--\eqref{i-CP} when $\eps >0$  is small. 

\begin{thm}[Concentration of mass at the origin]\label{conc}
Let $u=u(x,t)$ be a radial,  nonnegative, global-in-time solution to problem~\rf{agg-CP}--\rf{i-CP} with $\eps>0$, and with the interaction kernel $K$ satisfying assumptions~\rf{KN1}--\rf{KN3}. 
Assume that the radial initial condition $u_0$  satisfies conditions~\eqref{PhL10} and, moreover, suppose that  there exists   $\Lambda>0$ such that 
\begin{equation}
\mu_\Lambda\equiv  \int_{\mathbb{R}^N} \min\{|x|,\Lambda\} u_0(x)\dx < \frac{\kappa_\Lambda M}{4 \left( \kappa_\Lambda + 2 \|k'\|_\infty \right)}  \Lambda. \label{mu-Lambda}
\end{equation} 
Then for some explicit numbers  $\eps_\ast>0$,  $T_\ast>0$, $C_\ast>0$, and $\lambda>0$, independent of $\eps$, the following inequality holds true
\be
\int_0^{T_\ast}\int_{B_{\lambda\eps}}u(x,t)\dx \dt\ge C_\ast
\qquad 
\text{for all} \quad  \eps\in(0,\eps_\ast). 
\label{concen}
\ee
\end{thm}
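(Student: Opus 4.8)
The plan is a truncated‑moment (virial) argument. Fix the $\Lambda$ supplied by \eqref{mu-Lambda}, let $\phi$ be a radial weight which equals $|x|$ for $|x|\le\Lambda/2$, is concave and nondecreasing along rays, is constant for $|x|\ge\Lambda$, and is of class $C^1$ (in dimension $N=1$ one may simply take $\phi(x)=\min\{|x|,\Lambda\}$ and keep the distributional $\Delta\phi$), and set
\[
W(t):=\int_{\R^N}\phi(x)\,u(x,t)\,dx,\qquad m_\rho(t):=\int_{B_\rho}u(x,t)\,dx .
\]
Since $\phi(x)\le\min\{|x|,\Lambda\}$ we get $W(0)\le\mu_\Lambda$, which by \eqref{mu-Lambda} lies strictly below the threshold $\Theta:=\kappa_\Lambda M\Lambda/\bigl(2(\kappa_\Lambda+2\|k'\|_\infty)\bigr)$. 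The point of $W$ for \eqref{concen} is the elementary bound: as $\phi\ge\phi(\lambda\eps)=\lambda\eps$ off $B_{\lambda\eps}$ (for $\lambda\eps<\Lambda/2$), one has $\int_{B_{\lambda\eps}}u(\cdot,t)\ge M-W(t)/(\lambda\eps)$, so it suffices to show that $W(t)\le\tfrac12\lambda\eps M$ on a sub‑interval of $[0,T_\ast]$ whose length is bounded below independently of $\eps$.

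Next I would differentiate $W$ along \eqref{agg-CP} and integrate by parts (legitimate by the decay recorded in Remark~\ref{rem1}):
\[
W'(t)=\eps\!\int_{\R^N}\!\Delta\phi\;u\,dx+\frac12\iint_{\R^N\times\R^N}\!\bigl(\nabla\phi(x)-\nabla\phi(y)\bigr)\cdot\nabla K(x-y)\,u(x,t)u(y,t)\,dx\,dy .
\]
For the interaction term, on the core region $|x|,|y|<\Lambda/2$ one has $\nabla\phi(x)-\nabla\phi(y)=\tfrac{x}{|x|}-\tfrac{y}{|y|}$ and uses the identity $\bigl(\tfrac{x}{|x|}-\tfrac{y}{|y|}\bigr)\cdot\tfrac{x-y}{|x-y|}=\tfrac{|x|+|y|}{|x-y|}\bigl(1-\tfrac{x\cdot y}{|x|\,|y|}\bigr)\ge 1-\tfrac{x\cdot y}{|x|\,|y|}\ge 0$ together with $k'(|x-y|)\le-\kappa_\Lambda<0$ (valid since $|x-y|<\Lambda$); integrating out the angular variables using the radial symmetry of $u$ produces a genuinely dissipative term $\le-\tfrac{\kappa_\Lambda}{2}\,m_{\Lambda/2}(t)^2$, which does not degenerate when the mass concentrates. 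The remaining configurations — a point outside $B_{\Lambda/2}$, or $|x-y|\ge\Lambda$ — all require a point outside $B_{\Lambda/4}$, whose $u$‑mass is $\le 4W(t)/\Lambda$ (as $\phi\ge\Lambda/4$ there), so using $|\nabla\phi|\le1$ and $|k'|\le\|k'\|_\infty$ they are bounded by $c_1\|k'\|_\infty M\,W(t)/\Lambda$. For the diffusion term, $\Delta\phi|_{B_{\Lambda/2}}$ is a nonnegative distribution ($\tfrac{N-1}{|x|}\,dx$ for $N\ge2$, $2\delta_0$ for $N=1$), plus a bounded remainder on $B_\Lambda\setminus B_{\Lambda/2}$ and $0$ beyond; splitting $\int_{B_{\Lambda/2}}\tfrac{u}{|x|}$ over $B_{\lambda\eps}$ (where it is $\le\|u(t)\|_\infty C_N(\lambda\eps)^{N-1}$) and $B_{\Lambda/2}\setminus B_{\lambda\eps}$ (where it is $\le(M-m_{\lambda\eps})/(\lambda\eps)$) and invoking the a priori bound $\|u(t)\|_\infty\lesssim\eps^{-N}$ behind Remark~\ref{rem2} (this, and the Sobolev estimates it rests on, is where \eqref{KN3} is needed when $N=1$) gives $\eps\int\Delta\phi\,u\le C_2+\tfrac{N-1}{\lambda}(M-m_{\lambda\eps}(t))$ with $C_2$ independent of $\eps$. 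Altogether one obtains the schematic differential inequality
\[
W'(t)\ \le\ C_2\ -\ \tfrac{\kappa_\Lambda}{2}\,m_{\Lambda/2}(t)^2\ +\ \tfrac{N-1}{\lambda}\bigl(M-m_{\lambda\eps}(t)\bigr)\ +\ \tfrac{c_1\|k'\|_\infty M}{\Lambda}\,W(t).
\]

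The argument is then run by a bootstrap on $W<\Theta$. As long as $W(t)<\Theta$ one has $m_{\Lambda/4}(t)>M/2$, hence $m_{\Lambda/2}(t)>M/2$, so the dissipative term $\le-\tfrac{\kappa_\Lambda M^2}{8}$ dominates both the error term $c_1\|k'\|_\infty M\Theta/\Lambda$ (here the precise value of $\Theta$, and hence the constant $\kappa_\Lambda/(\kappa_\Lambda+2\|k'\|_\infty)$ of \eqref{mu-Lambda}, is exactly what is used) and, after choosing $\lambda\gtrsim(N-1)/(\kappa_\Lambda M)$, the term $\tfrac{N-1}{\lambda}(M-m_{\lambda\eps})$ whenever $m_{\lambda\eps}(t)\le M/2$; thus $W'(t)<0$ in that regime and $W$ can never exit $[0,\Theta)$, decreasing at a fixed linear rate while $m_{\lambda\eps}\le M/2$ and being held at the equilibrium level $W\sim\eps$ once it gets there (the source $C_2$ being too weak to raise it). Consequently, for $\eps<\eps_\ast$ there is $t_1\lesssim\mu_\Lambda/(\kappa_\Lambda M^2)$, independent of $\eps$, such that $W(t)\le\tfrac12\lambda\eps M$ for all $t\in[t_1,T_\ast]$; feeding this into $\int_{B_{\lambda\eps}}u(\cdot,t)\ge M-W(t)/(\lambda\eps)\ge M/2$ and integrating over $[t_1,T_\ast]$ yields \eqref{concen} with $C_\ast=\tfrac M2(T_\ast-t_1)$, and unwinding the inequalities exhibits explicit $\eps_\ast,T_\ast,C_\ast,\lambda$.

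The step I expect to be the main obstacle is the interaction estimate together with the handling of the diffusion term: one must extract a clean dissipative main term $\asymp-\kappa_\Lambda m_{\Lambda/2}^2$ — non‑vanishing even for highly concentrated mass — while showing the "boundary‑layer" contributions (one point in $B_\Lambda$ and the other out; or $|x-y|$ small, where $k'$ is as negative as possible but $\tfrac{x\cdot(x-y)}{|x|\,|x-y|}$ can carry the wrong sign and a $|x-y|^{-1}$ kernel) are genuinely lower order, bounded by $\|k'\|_\infty MW/\Lambda$ up to the $\eps$‑independent diffusion contribution; and the $\eps^{-N}$ blow‑up of $\|u(t)\|_\infty$ must be exploited — not fought — since it is precisely this that produces the equilibrium $W\sim\eps$ fixing the concentration scale at $\lambda\eps$. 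Tracking the absolute constants through this bookkeeping so that they match \eqref{mu-Lambda} (and, in $N=1$, so that \eqref{KN3} suffices for the a priori bounds) is the technically heaviest part; the rest is routine.
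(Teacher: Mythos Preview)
Your virial setup and your handling of the interaction term are essentially the same as the paper's: the truncated moment $W(t)=\int\varphi_\Lambda(|x|)u\,dx$, the symmetrization, the identity $(\tfrac{x}{|x|}-\tfrac{y}{|y|})\cdot\tfrac{x-y}{|x-y|}\ge 1-\tfrac{x\cdot y}{|x|\,|y|}$, and the bound on the off-diagonal contribution by $\|k'\|_\infty MW/\Lambda$ all match the paper's Theorem~\ref{thm:concentration}. The divergence begins with the diffusion term, and there your argument has a genuine gap.

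You bound $\eps\int_{B_{\lambda\eps}}\tfrac{u}{|x|}\,dx$ by $\|u(t)\|_\infty\cdot C_N(\lambda\eps)^{N-1}\cdot\eps$ and invoke $\|u(t)\|_\infty\lesssim\eps^{-N}$ to get an $\eps$-independent constant $C_2$. The subsequent bootstrap rests on the unproved assertion that ``the source $C_2$ [is] too weak to raise'' $W$, i.e.\ that $C_2+\tfrac{(N-1)M}{\lambda}<\tfrac{\kappa_\Lambda}{2}m_{\Lambda/2}^2$ (roughly). But $C_2\sim\lambda^{N-1}\cdot(\text{constant in the }L^\infty\text{ bound})$, and that constant depends on $\|k'\|_\infty$, $M$, and $u_0$ in a way you have no control over; there is no reason it should be dominated by $\kappa_\Lambda M^2$. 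Choosing $\lambda$ small to shrink $C_2$ makes $\tfrac{(N-1)M}{\lambda}$ blow up, and the minimum over $\lambda$ of the sum is again an uncontrolled constant. Without this comparison you cannot conclude $W'<0$ at the level $W=\tfrac12\lambda\eps M$, so the pointwise claim $W(t)\le\tfrac12\lambda\eps M$ on $[t_1,T_\ast]$ is unsupported. (In fact, if the comparison \emph{did} hold, your inequality would force $W'\le-c<0$ uniformly while $W<\Theta$, driving $W$ negative---so the inequality as you use it is internally inconsistent.)

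The paper handles this differently and this is the essential idea you are missing. It does \emph{not} bound the diffusion term inside the virial inequality; it keeps $\eps\DD_\Lambda(u(t))$ intact and integrates in time to obtain the \emph{lower} bound $\int_0^{T_\Lambda}\DD_\Lambda(u(t))\,{\rm e}^{-\omega_\Lambda t/\Lambda}\,dt\ge\Lambda\cL_\Lambda/\eps$ directly from $W\ge 0$ (Theorem~\ref{thm:concentration}). Only \emph{afterwards} does it bound $\DD_\Lambda$ from above, and crucially it does so by a H\"older interpolation that keeps $\int_{B_{\lambda\eps}}u$ as an explicit factor: for $N\ge2$,
\[
\int_0^T\!\!\int_{B_{\lambda\eps}}\frac{u}{|x|}\,dx\,dt\ \le\ C\,\eps^{-1}\,T^{\frac{2N+1}{4N-2}}\lambda^{\frac1{2N-1}}\Big(\int_0^T\!\!\int_{B_{\lambda\eps}}u\,dx\,dt\Big)^{\frac{2N-3}{4N-2}},
\]
using the $L^{(2N+1)/(2N-3)}$ bound of Lemma~\ref{Lpupper} (whose power of $\eps$ is exactly what makes the exponents match). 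Combining this upper bound with the lower bound on $\int\DD_\Lambda$ and choosing $\lambda$ to absorb the annulus contribution $MT/(\lambda\eps)$ yields~\eqref{concen}. Your $L^\infty$ estimate throws away precisely the factor of $\int_{B_{\lambda\eps}}u$ that the argument needs.
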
 

We also derive analogous estimates for $L^p$-norms of solutions on small balls.

\begin{cor} \label{c2}
Let $p\in [1,\infty)$. Under the assumptions of Theorem~\ref{conc} and using the same notation, the solution $u$ of problem~\rf{agg-CP}--\rf{i-CP} satisfies
\begin{equation} \label{conc2bis}
\int_0^{T_*} \left( \int_{B_{\lambda \varepsilon}} u(x,t)^p\dx \right)^{1/p} \dt\geq C_{**}(p) \varepsilon^{-\frac{N(p-1)}p}.
\end{equation} 
Here, the number $C_{**}(p)$ depends on the same parameters as $C_*$ in Theorem~\ref{conc} as well as on $p$.
\end{cor}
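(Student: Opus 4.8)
The plan is to deduce the $L^p$ bound~\eqref{conc2bis} from the $L^1$ bound~\eqref{concen} of Theorem~\ref{conc} by a pointwise-in-time application of Hölder's inequality on the ball $B_{\lambda\eps}$, taking advantage of the fact that this ball has volume of order $\eps^N$. Concretely, for a fixed $t>0$, Hölder's inequality with exponents $p$ and $p'=p/(p-1)$ gives
\begin{equation*}
\int_{B_{\lambda\eps}} u(x,t)\dx \le |B_{\lambda\eps}|^{1-\frac1p} \left( \int_{B_{\lambda\eps}} u(x,t)^p\dx \right)^{1/p},
\end{equation*}
so that, rearranging,
\begin{equation*}
\left( \int_{B_{\lambda\eps}} u(x,t)^p\dx \right)^{1/p} \ge |B_{\lambda\eps}|^{-\frac{p-1}p} \int_{B_{\lambda\eps}} u(x,t)\dx .
\end{equation*}

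Next I would insert the explicit value of the volume, $|B_{\lambda\eps}| = \omega_N (\lambda\eps)^N$ with $\omega_N$ the Lebesgue measure of the unit ball in $\R^N$, which yields the scaling factor
\begin{equation*}
|B_{\lambda\eps}|^{-\frac{p-1}p} = \left( \omega_N \lambda^N \right)^{-\frac{p-1}p}\, \eps^{-\frac{N(p-1)}p}.
\end{equation*}
Since this factor does not depend on $t$, integrating the previous inequality over $t\in(0,T_*)$ and then invoking~\eqref{concen} gives
\begin{equation*}
\int_0^{T_*} \left( \int_{B_{\lambda\eps}} u(x,t)^p\dx \right)^{1/p}\dt \ge \left( \omega_N \lambda^N \right)^{-\frac{p-1}p}\, \eps^{-\frac{N(p-1)}p} \int_0^{T_*}\int_{B_{\lambda\eps}} u(x,t)\dx\dt \ge C_* \left( \omega_N \lambda^N \right)^{-\frac{p-1}p}\, \eps^{-\frac{N(p-1)}p},
\end{equation*}
so that~\eqref{conc2bis} holds with $C_{**}(p) = C_* \left( \omega_N \lambda^N \right)^{-(p-1)/p}$, which depends only on $N$, $p$, and the parameters entering $C_*$ and $\lambda$ from Theorem~\ref{conc}. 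The cases $p=1$ (where the statement is exactly~\eqref{concen}) and $p\to\infty$ (where the exponent tends to $-N$) are covered consistently.

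There is essentially no obstacle here: the argument is a one-line consequence of Hölder's inequality combined with the measure of a small ball, and the only point requiring minimal care is bookkeeping the exponent $N(p-1)/p$ and checking that the constant remains $\eps$-independent. The substantive work has already been done in establishing Theorem~\ref{conc}; the corollary merely transfers the information to $L^p$ scales and makes explicit that concentration on $\eps$-balls forces the $L^p$-norms on these balls to blow up at the rate $\eps^{-N(p-1)/p}$ in the time-averaged sense.
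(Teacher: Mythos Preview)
Your argument is correct and coincides with the paper's own proof: both apply H\"older's inequality in space at each fixed time on the ball $B_{\lambda\eps}$, extract the factor $|B_{\lambda\eps}|^{(p-1)/p}=(\sigma_N/N)^{(p-1)/p}(\lambda\eps)^{N(p-1)/p}$, integrate in $t$, and invoke Theorem~\ref{conc}. Your constant $C_{**}(p)=C_*(\omega_N\lambda^N)^{-(p-1)/p}$ is exactly the paper's, since $\omega_N=\sigma_N/N$.
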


Theorem~\ref{conc} together with Corollary~\ref{c2} signify that, even if the  interactions described by the attractive kernel do 
not  lead to a formation of singularities for the solution $u(t)$ neither in finite nor  in the infinite time for $\eps>0$ (cf.~Remarks~\ref{rem1} and \ref{rem2}),
 there are concentration phenomena of solutions on $\varepsilon$-small balls for $0<\eps \ll 1$. 
To the best of our knowledge, this is the first result where the small diffusion asymptotics is analyzed in a sharp way for an aggregation model. Here, we have been motivated by results of this type %which have been 
obtained  in the specific case of %the solutions of 
the generalized Burgers equation  in \cite{B14,B16ab,B18}.

The proofs of Theorem~\ref{conc} and of Corollary~\ref{c2}
are based on an analysis of the time evolution  of the quantity ${\mathcal D}_\Lambda(u(t))$ defined below in \rf{D} which is related to the mass concentration of $u(t)$ at the origin. Then, in Section~\ref{sec:Lp}, we prove upper bounds for the norms in $L^p$ (and in $H^1$ for $N=1$) of solutions. Using these estimates in Section~\ref{sec:proof},
we derive lower bounds for the $L^p$-norms as 
stated in Theorem~\ref{conc} and  Corollary~\ref{c2}.

\begin{rem}\label{c3}
The order of  growth $\varepsilon^{-N(p-1)/p}$ of the $L^p$-norms stated in inequality~\eqref{conc2bis} is optimal because we have also an analogous (but without time average or spatial localisation) upper estimate. More precisely, 
we prove in Lemma~\ref{Lpupper} below that 
for each $p \in [1,\infty)$ 
there exists a number $C(p,u_0)>0$ such that
$$
\|u(t)\|_p \leq \ C(p,u_0)  \eps^{-N(p-1)/p} \qquad \text{for all } \quad t>0
$$
and for all sufficiently small $\varepsilon>0$.
This is a genuinely nonlinear effect  since such estimates  of $L^p$-norms 
for solutions of the heat equation $w_t=\eps\Delta w$ are different. 
Indeed, it follows from the explicit form of solutions via the convolution with  the Gauss--Weierstrass kernel that
$$
\|w(t)\|_p\asymp (\eps t)^{-N(p-1)/2p}\|w(0)\|_1.
$$ 
\end{rem}

\begin{rem}
Generalizations of such two-sided $\eps$-optimal estimates to other Sobolev norms  will be published in a subsequent paper~\cite{BBKL20}. 
\end{rem}

\begin{rem}
If $u_0$ has a finite first moment, namely  $\int_{\R^N}u_0(x)|x|\dx<\infty$, then the number $\mu_\Lambda$ defined in \rf{mu-Lambda} is trivially bounded from above by 
$$
\mu_\Lambda=\int_{\R^N}\min\{|x|,\Lambda\}u_0(x)\dx\le \int_{\R^N}u_0(x)|x|\dx\qquad{\rm for \ each\ \ }\Lambda>0.
$$
Thus, assumption \rf{mu-Lambda} is satisfied for each such initial data,  if $\Lambda\kappa_\Lambda\to\infty$ as $\Lambda\to\infty$
and  this property holds true e.g. in the case of 
the kernel $K(x)= -|x|$.
\end{rem}

\subsection*{Notation.}
		We denote by $B_r$ the ball in $\mathbb{R}^N$ centered at $x=0$ with radius $r>0$, and by $\sigma_N={2\pi^{N/2}}/{\Gamma\left(\frac{N}{2}\right)}$  the area of the unit sphere ${\mathbb S}^{N-1}$ in $\R^N$. 
For $p\in [1,\infty]$, the norms of the Lebesgue space $L^p(\mathbb{R}^N)$  are denoted by $\|\cdot\|_p$. 
As usual, we set $H^m=W^{m,2}$, $m>0$, and denote the corresponding homogeneous Sobolev seminorm by $\|\cdot\|_{\dot{H}^m}$. 
We use the analogous notation $\|\cdot\|_{\dot{W}^{m,p}}$ for the homogeneous seminorms in $W^{m,p}$.  
Throughout the paper, the letter  $C$ is used for various positive numbers which may vary from line to line but depend only on the dimension $N$ and the  bounds for the kernel $K$: $\|\nabla K\|_\infty=\|k'\|_\infty$ and $\|k''\|_1$ for $N=1$. 
The dependence upon additional parameters will be indicated explicitly.

%%%%%%%%%%%%%%%%%%%%%%%%%

\section{Concentration of solutions}\label{sec:cos}

We shall describe a concentration phenomenon at the origin of   solutions  
to problem~\eqref{agg-CP}--\eqref{i-CP}
by considering   the quantity  
\begin{equation}\label{D}
\DD_\Lambda(u(t)) \equiv  
\begin{dcases}
2 u(0,t) &\text{if} \quad N=1,\\
(N-1) \int_{B_{3\Lambda/2}} \frac{u(x,t)}{|x|}\dx &\text{if} \quad N\geq 2,
\end{dcases}
\end{equation}   
where the  scaling parameter  $\Lambda>0$ is chosen in a suitable way, according to the behaviour of the initial condition, see \eqref{mu-Lambda}. 
The following theorem, stating that as $\varepsilon \to 0$, after time averaging, $\DD_\Lambda(u)$ grows at least as $\varepsilon^{-1}$, is one of the main results of our work.

\begin{thm}%[Concentration at the origin]
 \label{thm:concentration}
Let the assumptions \rf{KN1}--\rf{KN3} hold true, and denote by $u$ a radial, nonnegative, global-in-time solution to problem \eqref{agg-CP}--\eqref{i-CP} with an arbitrary $\varepsilon>0$. 
Assume that  the radial nonnegative initial condition $u_0$ satisfies \rf{PhL10},  as well as    condition \rf{mu-Lambda} for some $\Lambda>0$. 
Then, there exist numbers   $T_\Lambda>0$,  $\cL_\Lambda>0$, and  
$\omega_\Lambda>0$
depending {only} on the dimension $N$,  mass $M$, the number $\mu_\Lambda$ in Assumption~\eqref{mu-Lambda}, and the quantities $\|k'\|_\infty$,   $\kappa_\Lambda$  in assumption \rf{KN2}  (see equations \rf{PhL7} and \rf{N0} below) such that
\begin{equation*}
 \int_0^{T_\Lambda} \DD_\Lambda(u(t)) {\mathrm e}^{-\omega_\Lambda t/\Lambda}\dt \ge \frac{\Lambda \cL_\Lambda}{\varepsilon}.
\end{equation*}
\end{thm}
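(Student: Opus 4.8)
The plan is to monitor the \emph{truncated first moment}
\begin{equation*}
\mu_\Lambda(t):=\int_{\R^N}\min\{|x|,\Lambda\}\,u(x,t)\dx\in[0,\Lambda M],
\end{equation*}
which is finite by \eqref{PhL10}, is $C^1$ in $t$, and reduces at $t=0$ to the number $\mu_\Lambda$ in \eqref{mu-Lambda}. Differentiating along \eqref{agg-CP} and integrating by parts — legitimate by the smoothness and fast decay of $u$ recorded in Remark~\ref{rem1}; the Lipschitz weight $\min\{|x|,\Lambda\}$ has distributional Laplacian $(N-1)|x|^{-1}$ on $B_\Lambda$ minus the surface measure of $\partial B_\Lambda$ when $N\ge2$, and $2\delta_0-\delta_\Lambda-\delta_{-\Lambda}$ when $N=1$ — and discarding the boundary part of the diffusion term, which is $\le0$ because $u\ge0$, one obtains
\begin{equation}\label{eq:concplan}
\frac{d}{dt}\mu_\Lambda(t)\le\eps\,\DD_\Lambda(u(t))+\mathcal J(t),\qquad \mathcal J(t):=\int_{B_\Lambda}u(x,t)\,\frac{x}{|x|}\cdot(\nabla K\ast u)(x,t)\dx .
\end{equation}
(For $N=1$ the weight fails to be $C^2$ at the endpoints, and assumption \eqref{KN3} is what makes the handling of $\mathcal J$ fully rigorous.) Since $\mu_\Lambda\ge0$, once $\mathcal J$ is bounded \emph{above} by a negative quantity, \eqref{eq:concplan} will deliver a lower bound for the time integral of $\DD_\Lambda$; producing that upper bound for $\mathcal J$ is the heart of the proof.

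The central claim is that there is an explicit $c_\Lambda>0$, depending only on $N$, $M$, $\kappa_\Lambda$ and $\|k'\|_\infty$, such that
\begin{equation*}
\mu_\Lambda(t)\le\theta_\Lambda\Lambda M\ \Longrightarrow\ \mathcal J(t)\le-c_\Lambda ,\qquad \theta_\Lambda:=\frac{\kappa_\Lambda}{4(\kappa_\Lambda+2\|k'\|_\infty)} .
\end{equation*}
To prove it I would symmetrise: with $Q(x,y):=\bigl(\tfrac{x}{|x|}-\tfrac{y}{|y|}\bigr)\cdot\tfrac{x-y}{|x-y|}\ge0$ (for $N=1$, $Q=(\operatorname{sgn}x-\operatorname{sgn}y)\operatorname{sgn}(x-y)\ge0$), split the pairs $(x,y)$ into those in $B_\Lambda\times B_\Lambda$ with $|x-y|<\Lambda$ and all the others. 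On the first set $k'(|x-y|)\le-\kappa_\Lambda$, so the contribution is $\le-\tfrac{\kappa_\Lambda}{2}\iint_{\{x,y\in B_\Lambda,\,|x-y|<\Lambda\}}u(x)u(y)Q(x,y)\dx\dy$; the remaining pairs — those at mutual distance $\ge\Lambda$ inside $B_\Lambda$, and the cross term $B_\Lambda\times B_\Lambda^{\,c}$ — are bounded in absolute value by $C\|k'\|_\infty M\mu_\Lambda(t)/\Lambda$ after applying Markov's inequality to $\min\{|x|,\Lambda\}u$, hence are small when $\mu_\Lambda(t)$ is small. For the main term, restrict the integration to $x,y\in B_{\Lambda/2}$ — where $|x-y|<\Lambda$ automatically, and where at least $(1-2\theta_\Lambda)M>\tfrac{M}{2}$ of the mass sits in this regime — and use, in polar coordinates (the inner angular factor depending only on $t=|y|/|x|$), the inequality
\begin{equation*}
\int_{\mathbb S^{N-1}}\!\!\int_{\mathbb S^{N-1}}(\omega-\omega')\cdot\frac{\omega-t\omega'}{|\omega-t\omega'|}\,d\omega\,d\omega'\ \ge\ \sigma_N^2\qquad(t>0),
\end{equation*}
which follows at once from the identity $(\omega-\omega')\cdot(\omega-t\omega')=(1+t)(1-\omega\cdot\omega')$ together with $|\omega-t\omega'|^2=1-2t\omega\cdot\omega'+t^2\le(1+t)^2$, after integrating the resulting pointwise bound $1-\omega\cdot\omega'$. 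This yields $\iint_{B_{\Lambda/2}\times B_{\Lambda/2}}u(x)u(y)Q(x,y)\dx\dy\ge\bigl(\int_{B_{\Lambda/2}}u\bigr)^2$, whence $\mathcal J(t)\le-\tfrac{\kappa_\Lambda}{2}\bigl((1-2\theta_\Lambda)M\bigr)^2+C\|k'\|_\infty\theta_\Lambda M^2\le-c_\Lambda$; the balance between the attractive term (carrying $\kappa_\Lambda$) and the tail (carrying $\|k'\|_\infty$) is precisely what forces the threshold $\theta_\Lambda$ appearing in \eqref{mu-Lambda}. I expect the main obstacle to lie exactly here: making the tail estimate sharp enough — this is where radial symmetry must be used in an essential way — so that it is genuinely subordinate to the attractive lower bound under the sharp condition \eqref{mu-Lambda}.

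Granting the claim, one concludes as follows. Fix a small $\omega_\Lambda>0$ (to be chosen in terms of $N$, $M$, $\mu_\Lambda$, $\kappa_\Lambda$, $\|k'\|_\infty$) and set $T_\Lambda:=\Lambda\log 2/\omega_\Lambda$. Let $\tau_*\in(0,T_\Lambda]$ be the largest time with $\mu_\Lambda(s)\le\theta_\Lambda\Lambda M$ for all $s\in[0,\tau_*]$; it exists because $\mu_\Lambda(0)=\mu_\Lambda<\theta_\Lambda\Lambda M$ by \eqref{mu-Lambda} and $\mu_\Lambda$ is continuous. On $[0,\tau_*]$ the claim and \eqref{eq:concplan} give $\eps\DD_\Lambda(u(t))\ge\frac{d}{dt}\mu_\Lambda(t)+c_\Lambda$; multiplying by ${\mathrm e}^{-\omega_\Lambda t/\Lambda}\le1$, integrating over $[0,\tau_*]$ and integrating the $\mu_\Lambda$-term by parts (discarding the nonnegative remainder) yields
\begin{equation*}
\eps\int_0^{\tau_*}\DD_\Lambda(u(t))\,{\mathrm e}^{-\omega_\Lambda t/\Lambda}\dt\ \ge\ \mu_\Lambda(\tau_*)\,{\mathrm e}^{-\omega_\Lambda\tau_*/\Lambda}-\mu_\Lambda+\frac{c_\Lambda\Lambda}{\omega_\Lambda}\bigl(1-{\mathrm e}^{-\omega_\Lambda\tau_*/\Lambda}\bigr).
\end{equation*}
If $\tau_*<T_\Lambda$ then $\mu_\Lambda(\tau_*)=\theta_\Lambda\Lambda M$, whereas if $\tau_*=T_\Lambda$ then ${\mathrm e}^{-\omega_\Lambda\tau_*/\Lambda}=\tfrac12$; since the right-hand side is affine in ${\mathrm e}^{-\omega_\Lambda\tau_*/\Lambda}$, a short computation using the strict inequality $\mu_\Lambda<\theta_\Lambda\Lambda M$ and the smallness of $\omega_\Lambda$ shows that in either case it is bounded below by $\Lambda\cL_\Lambda$ for an explicit $\cL_\Lambda>0$. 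As $\DD_\Lambda(u)\ge0$, enlarging the domain of integration to $[0,T_\Lambda]$ only increases the left-hand side, and the stated estimate follows, with $T_\Lambda$, $\cL_\Lambda$, $\omega_\Lambda$ depending only on $N$, $M$, $\mu_\Lambda$, $\kappa_\Lambda$, $\|k'\|_\infty$.
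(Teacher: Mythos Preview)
Your overall architecture — track a truncated first moment, show that the drift contribution is uniformly negative, and integrate against an exponential weight — is exactly the paper's. The differences lie in two technical choices, and one of them is where your acknowledged gap sits.

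\textbf{Weight and differential inequality.} The paper does not use the Lipschitz weight $\min\{|x|,\Lambda\}$ but a $C^1$ piecewise-quadratic function $\varphi_\Lambda$ with $\varphi_\Lambda'(|x|)=1$ on $B_{\Lambda/2}$, $\varphi_\Lambda'\in[0,1]$, and $\varphi_\Lambda''\le 0$. This removes the distributional-Laplacian bookkeeping you describe and, more importantly, makes the natural splitting region $B_{\Lambda/2}\times B_{\Lambda/2}$, on which $|x-y|\le\Lambda$ holds automatically. The drift term is then bounded not by a threshold inequality but \emph{linearly} in the moment:
\[
J_\Lambda(t)\ \le\ -\tfrac{\kappa_\Lambda}{2}M^2+\frac{\omega_\Lambda}{\Lambda}\,\cI_\Lambda(t),\qquad \omega_\Lambda=2M(\kappa_\Lambda+2\|k'\|_\infty),
\]
obtained by writing $\bigl(\int_{B_{\Lambda/2}}u\bigr)^2=M^2-\iint_{(\R^N\times\R^N)\setminus B_{\Lambda/2}^2}u(x)u(y)\dx\dy$ and using $\int_{\R^N\setminus B_{\Lambda/2}}u\le 2\cI_\Lambda$. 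The full inequality $\Lambda\cI_\Lambda'\le\varepsilon\DD_\Lambda-\tfrac{\kappa_\Lambda}{2}M^2+\omega_\Lambda\cI_\Lambda$ is then integrated with the integrating factor $e^{-\omega_\Lambda t/\Lambda}$ over $[0,T]$ for \emph{all} $T$, no stopping time needed. The exponential weight in the theorem is thus not a free small parameter but the specific Gr\"onwall rate; this is why the statement points to the explicit formulae \eqref{PhL7}--\eqref{N0}.

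\textbf{The gap you flag is real.} Your threshold bound $\mathcal J(t)\le -c_\Lambda$ under $\mu_\Lambda(t)\le\theta_\Lambda\Lambda M$ requires the tail constant $C$ to be small enough that $\tfrac{\kappa_\Lambda}{2}(1-2\theta_\Lambda)^2>C\|k'\|_\infty\theta_\Lambda$. With your non-smooth weight you pick up an extra tail region $\{x,y\in B_\Lambda,\ |x-y|\ge\Lambda\}$ absent from the paper's scheme, and the resulting crude constant (of order $5$) does not close the loop at the exact $\theta_\Lambda=\kappa_\Lambda/4(\kappa_\Lambda+2\|k'\|_\infty)$ prescribed by \eqref{mu-Lambda}; you would prove the theorem only under a stricter smallness assumption on $\mu_\Lambda$. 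The fix is precisely the paper's linearisation trick above: replace $(\int_{B_{\Lambda/2}}u)^2\ge((1-2\theta_\Lambda)M)^2$ by $(\int_{B_{\Lambda/2}}u)^2\ge M^2-4M\cI_\Lambda$, absorb the resulting linear term into the integrating factor, and the threshold \eqref{mu-Lambda} falls out exactly.
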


\begin{proof}
We define the function
\begin{equation}\label{phi}
\varphi(s)=
\begin{cases}
s& \text{if}\quad  0\le s\leq \frac{1}{2},\\
1-\frac12\left(\frac{3}{2}-s\right)^2& \text{if}\quad   \frac{1}{2}\le s \le\frac{3}{2}, \\
1& \text{if}\quad  s\geq  \frac{3}{2},
\end{cases}
\end{equation} 
with  the following properties
\begin{itemize}
\item $0\leq \varphi(s)\leq \min\{s,1\}$ for all $s\geq 0$, 
\item $0\leq \varphi'(s)\leq 1$ for all $s\geq 0$, 
\item  $\varphi''(s)\leq 0$ for all $s\geq 0$ such that $s\neq \frac{1}{2}$ and $s\neq \frac{3}{2}$.
\end{itemize}
For each $\Lambda>0$, we set $\varphi_\Lambda(s) = \varphi(s/\Lambda)$ and we introduce the ``truncated moment'' (compared to the first moment with the  function $|x|$ as weight considered in, e.g., \cite{BKL09}) 
\begin{equation}\label{It}
\cI_\Lambda(t) \equiv \int_{\R^N} \varphi_\Lambda(|x|) u(x,t)\dx \qquad \text{for all}\quad t\ge 0.
\end{equation}
Notice that
\begin{equation}
{\mathcal I}_\Lambda(t)\le M \qquad \text{for all}\quad t\ge 0, \label{I-M}
\end{equation}
 by the mass conservation property \rf{m-cons}   and  properties of $\varphi_\Lambda$.
Our  goal is to derive a differential inequality for $\cI_\Lambda$, see \eqref{dI}  below. 
Thus,  we multiply equation \eqref{agg-CP} by $\varphi_\Lambda(|x|)$, and integrate the resulting identity with respect to $x\in \R^N$. 

Let us show that the contribution of the diffusive term in equation \rf{agg-CP} satisfies the inequality
\begin{equation}
\int_{\R^N} \varphi_\Lambda(|x|)\Delta u(x,t)\dx\le \frac{\DD_\Lambda(u(t))}{\Lambda}  \qquad \text{for all}\quad t\ge 0. \label{PhL2}
\end{equation}
Indeed, if $N\ge 2$, we integrate by parts and use the properties of $\varphi_\Lambda$, positivity of $u$, as well as the fact that $u(x,t)=u(r,t)$ with $r=|x|$ to obtain
\begin{align*} 
\int_{\R^N} \varphi_\Lambda(|x|)\Delta u(x,t)\dx&=-\frac{1}{\Lambda} \int_{\R^N} \varphi'\left( \frac{|x|}{\Lambda} \right) \frac{x}{|x|}\cdot\nabla u(x,t)\dx \\
& =- \frac{\sigma_N}{\Lambda} \int_0^\infty \varphi'\left( \frac{r}{\Lambda} \right) r^{N-1} u_r(r,t) \dr \\
& = \frac{\sigma_N}{\Lambda} \int_0^\infty \left[ \frac{1}{\Lambda} \varphi''\left( \frac{r}{\Lambda} \right)  r^{N-1} + (N-1) \varphi'\left( \frac{r}{\Lambda} \right) r^{N-2} \right] u(r,t)\dr \\
&  \qquad - \frac{\sigma_N}{\Lambda} \left[ \varphi'\left( \frac{r}{\Lambda} \right) r^{N-1} u(r,t) \right]_{r=0}^{r=\infty} \\
&  \le \frac{(N-1)\sigma_N}{\Lambda} \int_0^{3\Lambda/2} u(r,t)r^{N-2}\dr = \frac{\DD_\Lambda(u(t))}{\Lambda}.
\end{align*}
Similarly, when $N=1$, it follows from the symmetry of $u$ that
\begin{align*}
\int_{\mathbb{R}} \varphi_\Lambda(|x|) u_{xx}(x,t) \dx & = 2 \int_0^\infty \varphi_\Lambda(x) u_{xx}(x,t)\dx \\
& = 2 \left[ \varphi\left( \frac{x}{\Lambda} \right) u_x(x,t) \right]_{x=0}^{x=\infty} - \frac{2}{\Lambda} \int_0^\infty \varphi'\left( \frac{x}{\Lambda} \right) u_x(x,t)\dx \\
& = - \frac{2}{\Lambda} \left[ \varphi'\left( \frac{x}{\Lambda} \right) u(x,t) \right]_{x=0}^{x=\infty} + \frac{2}{\Lambda^2} \int_0^\infty \varphi''\left( \frac{x}{\Lambda} \right) u(x,t)\dx \\
& \le \frac{2}{\Lambda} u(0,t) = \frac{\DD_\Lambda(u(t))}{\Lambda} ,
\end{align*}
which completes the proof of inequality \eqref{PhL2}.

Next, we estimate the contribution of the truncated moment of the nonlinear drift term 
\begin{equation*}
J_\Lambda(t) \equiv  - \int_{\R^N} \varphi_\Lambda(|x|)\nabla\cdot \left(u(x,t)\ \nabla K\ast u(x,t)\right)\dx. 
\end{equation*}
Integrating by parts and using the properties of $K$ and $\varphi_\Lambda$, as well as a symmetrization argument, we obtain   
%\notka{Alex: Hidden decay assumptions.}
\begin{align}\label{derl1}
J_\Lambda(t) &= \frac{1}{\Lambda} \int_{\R^N} u(x,t) \varphi'\left( \frac{|x|}{\Lambda} \right) \frac{x}{|x|} \cdot (\nabla K\ast u)(x,t)\dx 
\\ \nonumber
&= \frac{1}{\Lambda} \int_{\R^N} \int_{\R^N} u(x,t) u(y,t) \varphi'\left( \frac{|x|}{\Lambda} \right) \frac{x}{|x|} \cdot \nabla K(x-y)\dx \dy  
\\ \nonumber&=\frac{1}{2\Lambda} \int_{\R^N} \int_{\R^N} u(x,t) u(y,t) k'(|x-y|) \Phi_\Lambda(x,y)\dx \dy, 
\end{align}
where
\begin{equation*}
\Phi_\Lambda(x,y) \equiv 
\frac{x-y}{|x-y|} \cdot \left[ \varphi'\left( \frac{|x|}{\Lambda} \right) \frac{x}{|x|} - \varphi'\left( \frac{|y|}{\Lambda} \right) \frac{y}{|y|} \right], \qquad (x,y)\in \mathbb{R}^N\times\mathbb{R}^N.
\end{equation*}
Introducing the quantity 
\begin{equation*}
J_{\Lambda,1}(t) = \frac{1}{2\Lambda} \int_{B_{\Lambda/2}} \int_{B_{\Lambda/2}} u(x,t) u(y,t) k'(|x-y|) \Phi_\Lambda(x,y)\dx \dy,
\end{equation*}
we notice that, for $(x,y)\in B_{\Lambda/2}\times B_{\Lambda/2}$, 
\begin{equation} \label{Phisym}
\Phi_\Lambda(x,y) = \frac{x-y}{|x-y|} \cdot \left( \frac{x}{|x|} - \frac{y}{|y|} \right) = \frac{|x|+|y|}{|x-y|} \left( 1 - \frac{x\cdot y}{|x| |y|} \right) \ge 1 - \frac{x\cdot y}{|x| |y|} \ge 0.
\end{equation}
Moreover, since $|x-y|\le \Lambda$, by assumption \rf{KN2}, we have 
\begin{equation} \label{integ}
k'(|x-y|)\le -\kappa_\Lambda,\quad (x,y)\in B_{\Lambda/2}\times B_{\Lambda/2}.
\end{equation}
Combining \eqref{Phisym} and \eqref{integ} we get 
\begin{align*}
J_{\Lambda,1}(t) & \le - \frac{\kappa_\Lambda}{2\Lambda} \int_{B_{\Lambda/2}} \int_{B_{\Lambda/2}} u(x,t) u(y,t) \Phi_\Lambda(x,y)\dx \dy \\
& \le - \frac{\kappa_\Lambda}{2\Lambda} \int_{B_{\Lambda/2}} \int_{B_{\Lambda/2}} u(x,t) u(y,t) \left( 1 - \frac{x\cdot y}{|x| |y|} \right)\dx \dy.
\end{align*}
Moreover, recalling the identity
\begin{equation*}
\int_{\R^N} x\,\psi(x) \dx=0,
\end{equation*} 
 valid for every radially symmetric function $\psi$, we end up with the inequality 
\begin{equation*}
J_{\Lambda,1}(t) \le - \frac{\kappa_\Lambda}{2\Lambda} \int_{B_{\Lambda/2}} \int_{B_{\Lambda/2}} u(x,t) u(y,t)\dx \dy.
\end{equation*}
Finally, we use the mass conservation property \rf{m-cons}, the inclusion
\begin{equation}
\left( \R^N\times\R^N \right) \setminus \left( B_{\Lambda/2}\times B_{\Lambda/2} \right) \subset \Big( \R^N\times\left(\R^N\setminus B_{\Lambda/2}\right) \Big)\ \cup\ \Big( \left(\R^N\setminus B_{\Lambda/2}\right)\times \R^N \Big), \label{PhL3}
\end{equation}
and the inequality 
\begin{equation}
\frac{1}{2}\leq \varphi_\Lambda(|x|) = \varphi\left( \frac{|x|}{\Lambda} \right) \qquad {\rm for \ all\ \ \ } x\in \mathbb{R}^N \setminus B_{\Lambda/2}, \label{PhL4} 
\end{equation}
to conclude that
\begin{align}
J_{\Lambda,1}(t) & \le - \frac{\kappa_\Lambda}{2\Lambda} M^2 + \frac{\kappa_\Lambda}{2\Lambda} \int _{\left( \R^N\times\R^N \right) \setminus \left( B_{\Lambda/2}\times B_{\Lambda/2} \right)} u(x,t) u(y,t)\dx \dy \nonumber \\
& \le - \frac{\kappa_\Lambda}{2\Lambda} M^2 + \frac{\kappa_\Lambda}{\Lambda} \int_{\mathbb{R}^N} \int _{\R^N \setminus B_{\Lambda/2}} u(x,t) u(y,t)\dx \dy \nonumber \\
& \le - \frac{\kappa_\Lambda}{2\Lambda} M^2 + \frac{2 \kappa_\Lambda}{\Lambda} M \cI_\Lambda(t). \label{PhL5}
\end{align} 
Next, owing to the boundedness of $\varphi'$ and $k'$, 
\begin{equation*}
k'(|x-y|) \Phi_\Lambda(x,y) \le \|k'\|_\infty \left[ \varphi'\left( \frac{|x|}{\Lambda} \right) + \varphi'\left( \frac{|y|}{\Lambda} \right) \right] \le 2 \|k'\|_\infty, \qquad (x,y)\in \mathbb{R}^N\times \mathbb{R}^N,
\end{equation*}
which we combine with relations \eqref{PhL3} and \eqref{PhL4} to estimate $J_\Lambda(t)-J_{\Lambda,1}(t)$, and thereby obtain
\begin{align}
J_\Lambda(t)-J_{\Lambda,1}(t) & = \frac{1}{2\Lambda} \int _{\left( \R^N\times\R^N \right) \setminus \left( B_{\Lambda/2}\times B_{\Lambda/2} \right)} u(x,t) u(y,t) k'(|x-y|) \Phi_\Lambda(x,y)\dx \dy \nonumber \\
& \le \frac{\|k'\|_\infty}{\Lambda} \int _{\left( \R^N\times\R^N \right) \setminus \left( B_{\Lambda/2}\times B_{\Lambda/2} \right)} u(x,t) u(y,t)\dx \dy \nonumber \\
& \le \frac{2 \|k'\|_\infty}{\Lambda} \int_{\mathbb{R}^N} \int _{\R^N \setminus B_{\Lambda/2}} u(x,t) u(y,t)\dx \dy \nonumber \\
& \le \frac{4 \|k'\|_\infty}{\Lambda} M \cI_\Lambda(t). \label{PhL6}
\end{align}

Gathering identity \eqref{derl1} and estimates \eqref{PhL2}, \eqref{PhL5} and \eqref{PhL6}, we get the differential inequality  
\begin{equation} \label{dI}
\Lambda \frac{{\rm d}}{\dt} \cI_\Lambda(t) \leq \varepsilon \DD_\Lambda(u(t)) -\frac{\kappa_\Lambda}{2} M^2 + \omega_\Lambda \cI_\Lambda(t)
\qquad \text{for all}\quad t\ge 0,
\end{equation}
where 
\begin{equation}
\omega_\Lambda \equiv  2 M \left( \kappa_\Lambda + 2 \|k'\|_\infty \right). \label{PhL7}
\end{equation}
Equivalently,
\begin{equation*}
\frac{{\rm d}}{\dt} \left[ \cI_\Lambda(t) {\mathrm e}^{-\omega_\Lambda t/\Lambda} \right] \le \left( \varepsilon \DD_\Lambda(u(t)) -\frac{\kappa_\Lambda}{2} M^2 \right) \frac{{\mathrm e}^{-\omega_\Lambda t/\Lambda}}{\Lambda} \qquad \text{for all}\quad t\ge 0.
\end{equation*}
After an integration with respect to time, we obtain for each $T>0$,
\begin{align*}
- \cI_\Lambda(0) & \le \cI_\Lambda(T) {\mathrm e}^{-\omega_\Lambda T/\Lambda} - \cI_\Lambda(0) \le \frac{1}{\Lambda} \int_0^T \left[ \varepsilon \DD_\Lambda(u(t)) - \frac{\kappa_\Lambda}{2} M^2 \right] {\mathrm e}^{-\omega_\Lambda t/\Lambda}\dt \\
& = \frac{\varepsilon}{\Lambda} \int_0^T \DD_\Lambda(u(t)) {\mathrm e}^{-\omega_\Lambda t/\Lambda}\dt - \frac{\kappa_\Lambda}{2\omega_\Lambda} M^2 \left( 1 - {\mathrm e}^{-\omega_\Lambda T/\Lambda} \right).
\end{align*}
Hence, for each $T>0$ we have
\begin{equation}
\frac{\varepsilon}{\Lambda} \int_0^T \DD_\Lambda(u(t)) {\mathrm e}^{-\omega_\Lambda t/\Lambda}\dt \ge \frac{\kappa_\Lambda}{2\omega_\Lambda} M^2 \left( 1 - {\mathrm e}^{-\omega_\Lambda T/\Lambda} \right) - \cI_\Lambda(0).
 \label{PhL8}
\end{equation}
By  assumption \eqref{mu-Lambda} and the properties of $\varphi$, we get 
\begin{equation*}
\cI_\Lambda(0) \le \frac{1}{\Lambda} \int_{\mathbb{R}^N} \min\{|x|,\Lambda\} u_0(x)\dx = \frac{\mu_\Lambda}{\Lambda} < \frac{\kappa_\Lambda}{2\omega_\Lambda} M^2, 
\end{equation*}
so that choosing the numbers
\begin{equation}\label{N0}
\cL_\Lambda \equiv  \frac{1}{2} \left( \frac{\kappa_\Lambda}{2\omega_\Lambda} M^2 - \cI_\Lambda(0) \right)> 0 
\qquad 
\text{and}\qquad 
T_\Lambda \equiv  \frac{\Lambda}{\omega_\Lambda} \log\left( \frac{\kappa_\Lambda M^2}{2 \omega_\Lambda \cL_\Lambda} \right) > 0,
\end{equation}
we obtain 
\begin{equation}
\frac{\kappa_\Lambda}{2\omega_\Lambda} M^2 \left( 1 - {\mathrm e}^{-\omega_\Lambda T/\Lambda} \right) - \cI_\Lambda(0) \ge \cL_\Lambda \qquad\text{for each}
\qquad 
  T\ge T_\Lambda, \label{PhL9}
\end{equation}
which  completes the proof.
\end{proof}

%%%%%%%%%%%%%%%%%%%%%%%%%%%%%

\section{Upper estimates of  Lebesgue and Sobolev norms} \label{sec:Lp}  %\notka{Alex: Hidden decay assumptions everywhere here.}

\newcounter{NumConstC}

  In this section, we deal with a global-in-time, nonnegative, regular  solution to problem~\rf{agg-CP}--\rf{i-CP} corresponding to the initial condition $0\le u_0\in L^1(\mathbb{R}^N)\cap L^{\infty}(\mathbb{R}^N)$ (moreover, we require $u_0\in H^1(\R)$ for $N=1$).
For $N \ge 2$, the only assumption on the kernel is $\nabla K\in L^\infty(\mathbb{R}^N)$ and for $N=1$ we use a more specific assumption
 (which can easily be generalized).  
In particular, the kernel $K$ can be smooth and  $\kappa_0=0$. 
Here, we do not require $u$ to be radially symmetric. 
We will use systematically the mass conservation property \rf{m-cons}.

\begin{lem} \label{Lpupper}
Let $u$ be a (not necessarily radial) nonnegative solution to problem~\rf{agg-CP}--\rf{i-CP} with $\eps>0$, with the interaction kernel satisfying $\nabla K\in L^\infty(\mathbb{R}^N)$, and with initial condition $u_0$ such as in \rf{PhL10}. 
For each $p \in [1,\infty)$ there exists a constant $C_p>0$ such that
$$
\|u(t)\|_p \leq \max\left\{ M,\|u_0\|_{\max\{2,p\}},\ C_{p} M^{(N(p-1)+p)/p} \eps^{-N(p-1)/p}\right\}
$$
for  all  $t \geq 0$.
\end{lem}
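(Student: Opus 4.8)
The plan is to run an $L^p$ energy estimate on the solution. I would first handle $p\ge 2$ (for which $u_0\in L^p$ is automatic since $\|u_0\|_p^p\le\|u_0\|_\infty^{p-1}M$), and then deduce the range $p\in[1,2)$ by interpolating $\|u\|_p$ between $\|u\|_1=M$ (mass conservation) and $\|u\|_2$, using the $p=2$ bound already obtained; the case $p=1$ is just $\|u(t)\|_1=M$. Throughout, $u$ being nonnegative, smooth, and rapidly decaying (cf.\ Remark~\ref{rem1}), the identity $\frac{d}{dt}\|u(t)\|_p^p=p\int_{\R^N}u^{p-1}u_t$ and all integrations by parts below are justified, and $y(t):=\|u(t)\|_p^p$ is $C^1$.

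For $p\ge2$, multiplying \eqref{agg-CP} by $p\,u^{p-1}$ and integrating gives, for the diffusive term, $\varepsilon p\int u^{p-1}\Delta u=-\varepsilon p(p-1)\int u^{p-2}|\nabla u|^2=-\tfrac{4\varepsilon(p-1)}{p}\|\nabla u^{p/2}\|_2^2$. For the drift term, integrating by parts and using $u^{p-1}\nabla u=\tfrac2p\,u^{p/2}\nabla u^{p/2}$ yields $2(p-1)\int u^{p/2}\,\nabla u^{p/2}\cdot(\nabla K\ast u)$; I would then bound $\|\nabla K\ast u\|_\infty\le\|\nabla K\|_\infty\|u\|_1=\|k'\|_\infty M$, apply Cauchy--Schwarz, and use Young's inequality to absorb half of the dissipation, arriving at
\[
\frac{d}{dt}y(t)\ \le\ -\frac{2\varepsilon(p-1)}{p}\,\big\|\nabla u^{p/2}(t)\big\|_2^2\ +\ \frac{p(p-1)\|k'\|_\infty^2 M^2}{2\varepsilon}\,y(t).
\]

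Next I would invoke the Gagliardo--Nirenberg inequality in the form $\|w\|_2\le C_{N,p}\|\nabla w\|_2^{a}\|w\|_{2/p}^{1-a}$ with $a=\tfrac{N(p-1)}{N(p-1)+2}\in(0,1)$, applied to $w=u^{p/2}$; since $\|u^{p/2}\|_{2/p}^{2/p}=\|u\|_1=M$, this gives $\|\nabla u^{p/2}\|_2^2\ge C\,y^{1/a}M^{-p(1-a)/a}$ with $\tfrac1a=1+\tfrac{2}{N(p-1)}>1$. (For $N\ge3$ this is Sobolev embedding combined with Hölder interpolation against the conserved $\|u\|_1$; for $N=1,2$ one uses the corresponding Gagliardo--Nirenberg inequality in the same way.) Substituting, the differential inequality becomes the scalar one $\dot y\le -A\varepsilon\,y^{1/a}+B\varepsilon^{-1}y$ with $A,B>0$ depending only on $N,p,\|k'\|_\infty,M$. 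A standard comparison argument then gives $y(t)\le\max\{y(0),\,(B/(A\varepsilon^2))^{a/(1-a)}\}$ for all $t\ge0$; since $\tfrac{a}{1-a}=\tfrac{N(p-1)}2$ and the powers of $M$ combine to $N(p-1)+p$, taking $p$-th roots produces exactly $\|u(t)\|_p\le\max\{\|u_0\|_p,\,C_pM^{(N(p-1)+p)/p}\varepsilon^{-N(p-1)/p}\}$, and the interpolation step extends this to $p\in[1,2)$ with $\|u_0\|_{\max\{2,p\}}=\|u_0\|_2$.

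The step requiring real care is twofold: first, controlling the nonlinear drift term purely through the \emph{conserved} quantity $\|u\|_1=M$ together with the dissipation — which is precisely where $\nabla K\in L^\infty$ enters; and second, choosing the Gagliardo--Nirenberg exponent so that $\|\nabla u^{p/2}\|_2^2$ is converted into a power of $y=\|u\|_p^p$ \emph{strictly larger than $1$} with the remaining factor a power of $M$, so that the resulting ODE for $y$ is dissipative at large values without any $L^p$-bootstrap and produces the sharp $\varepsilon^{-N(p-1)}$ scaling. Tracking the exact powers of $\varepsilon$ and $M$ through the Young and Gagliardo--Nirenberg steps is routine but must be carried out carefully to land on the exponents stated.
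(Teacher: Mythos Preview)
Your proposal is correct and follows essentially the same approach as the paper: an $L^p$ energy estimate for $p\ge 2$, the bound $\|\nabla K\ast u\|_\infty\le\|\nabla K\|_\infty M$, Gagliardo--Nirenberg to convert the dissipation into a superlinear power of $\|u\|_p^p$ times a power of $M$, a comparison argument showing $\|u\|_p$ cannot exceed the resulting threshold, and finally H\"older interpolation between $L^1$ and $L^2$ for $1<p<2$. The only cosmetic differences are that the paper keeps the differential inequality in factored form (so the right-hand side is manifestly nonpositive past the threshold) rather than splitting via Young's inequality into $-A\varepsilon y^{1/a}+B\varepsilon^{-1}y$, and that the paper applies Gagliardo--Nirenberg with $\|u^{p/2}\|_1$ and then interpolates $\|u\|_{p/2}$ between $\|u\|_1$ and $\|u\|_p$, whereas you use the equivalent one-step form with $\|u^{p/2}\|_{2/p}=M^{p/2}$.
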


\begin{proof}
The case $p=1$ is obvious by the conservation of mass \rf{m-cons}. 

For $p\ge 2$, we use the energy method. 
Integrating by parts, and then using the H{\"o}lder and the Young inequalities as well as the assumption $\nabla K\in L^\infty(\mathbb{R}^N)$, we obtain
\begin{align*}
\frac{1}{p(p-1)} \frac{\rm d}{\dt} \|u\|_p^p & = - \eps \int_{\R^N}{|\nabla u |^2 u^{p-2} \dx} + \int_{\R^N} u^{p-1} \nabla u \cdot (\nabla K\ast u) \dx \\
& \leq - \frac{4\eps}{p^2} \|\nabla u^{p/2} \|_2^2  + \frac{2}{p} \|u^{p/2}\|_2 \|\nabla u^{p/2} \|_2 \|\nabla K\ast u\|_\infty \\
& \le \frac{4}{p^2} \|\nabla u^{p/2} \|_2 \left(- \eps \|\nabla u^{p/2} \|_2 + \frac{p}{2} \| \nabla K \|_\infty \|u\|_1 \|u\|_p^{p/2} \right)  \\
& \le \frac{4}{p^2} \|\nabla u^{p/2} \|_2 \left(- \eps \|\nabla u^{p/2} \|_2 + \frac{pM}{2} \| \nabla K \|_\infty \|u\|_p^{p/2} \right).
\end{align*}
It follows from  the H\"older and Gagliardo--Nirenberg inequalities and the mass conservation \rf{m-cons} that 
\begin{align*} %\label{preGNp}
\|u\|_p^{p/2} = \|u^{p/2}\|_2 & \le C \|\nabla u^{p/2} \|_2^{N/(N+2)} \|u^{p/2}\|_1^{2/(N+2)} \\
& \le C \|\nabla u^{p/2} \|_2^{N/(N+2)} \left( \|u\|_p^{p(p-2)/2(p-1)} M^{p/2(p-1)} \right)^{2/(N+2)} \\
& \le C \|\nabla u^{p/2} \|_2^{N/(N+2)} M^{p/(N+2)(p-1)} \|u\|_p^{p(p-2)/(N+2)(p-1)},
\end{align*} 
and therefore
\begin{equation} \label{GNp}
\|\nabla u^{p/2}\|_2 \ge C M^{-p/N(p-1)} \|u\|_p^{p(N(p-1)+2)/2N(p-1)}.
\end{equation}
Using \eqref{GNp}, we get
\begin{align} \nonumber
\frac{1}{p(p-1)} \frac{\rm d}{\dt} \|u\|_p^p \le\ & \frac{4}{p^2} \|u\|_p^{p/2} \| \nabla u^{p/2 }\|_2
\\ \nonumber
& \times \left(- C \eps M^{-p/N(p-1)} \|u\|_p^{p/N(p-1)} + \frac{pM}{2} \|\nabla K\|_\infty \right)
\\ \nonumber
\le &\ \frac{2M}{p} \|\nabla K\|_\infty \|u\|_p^{p/2} \| \nabla u^{p/2 }\|_2
\\ \label{decr}
& \times \left(- \eps C(p)^{-p/N(p-1)} \|u\|_p^{p/N(p-1)} M^{-(N(p-1)+p)/N(p-1)} + 1\right) 
\end{align}
Let us show that the inequality \eqref{decr} implies the estimate  
\begin{equation}
\|u(t)\|_{p} \leq U_p \equiv \max\left\{ \|u_0\|_p,\ C(p) M^{(N(p-1)+p)/p} \eps^{-N(p-1)/p} \right\}\ \ \ {\rm for\ all\ \ }t>0. \label{PhL202}
\end{equation} 
Indeed, for $\delta>0$, consider the set 
$$
A_{\delta}=\left\{ t \geq 0:\, \|u(t)\|_{p} \leq U_p+\delta \right\}.
$$
Clearly, $0 \in A_{\delta}$ and the time continuity of $u$ in $L^p(\mathbb{R}^N)$ ensures that 
$$
\tau_\delta := \sup\{ t\ge 0:\, [0,t]\subset A_\delta \}\in (0,\infty].
$$
Assume now for contradiction that $\tau_\delta<\infty$. On the one hand, the definition of $\tau_\delta$ implies that 
\begin{equation}
\|u(\tau_\delta)\|_p^p = (U_p + \delta)^p \ge \|u(t)\|_p^p \ \ \ {\rm for\ all\ \ }t\in (0,\tau_\delta). \label{PhL200}
\end{equation}
Hence,
\begin{equation}
\frac{\rm d}{\dt} \|u(\tau_\delta)\|_p^p\ge 0. \label{PhL201}
\end{equation}
On the other hand, we infer from \eqref{decr} and \eqref{PhL200} that 
\begin{align*}
\frac{\rm d}{\dt} \|u(\tau_\delta)\|_p^p & \le \frac{2M}{p} \|\nabla K\|_\infty \|u(\tau_\delta)\|_p^{p/2} \| \nabla u^{p/2}(\tau_\delta) \|_2 \left( - \|u(\tau_\delta)\|_p^{p/N(p-1)} U_p^{-p/N(p-1)} + 1 \right) \\
& = \frac{2M}{p} \|\nabla K\|_\infty \|u(\tau_\delta)\|_p^{p/2} \| \nabla u^{p/2}(\tau_\delta) \|_2 \left( - \left( \frac{U_p+\delta}{U_p} \right)^{p/N(p-1)} + 1 \right) < 0,
\end{align*}
which contradicts \eqref{PhL201}. Consequently, $\tau_\delta=\infty$ and $A_{\delta}=[0,\infty)$ for all $\delta>0$. Letting $\delta\to 0$ completes the proof of \eqref{PhL202}.
\\
The case $1<p<2$ follows then from H{\"o}lder's inequality and the case $p=2$. Indeed, by \eqref{PhL202} with $p=2$,
\begin{align*}
\Vert u(t) \Vert_p & \le M^{(2-p)/p} \Vert u(t) \Vert_2^{(2p-2)/p} \\
& \leq M^{(2-p)/p} \max\left\{ \|u_0\|_2,\ C(2) M^{(N+2)/2} \eps^{-N/2} \right\}^{2(p-1)/p}
\\
& \le \max \left\{ M^{(2-p)/p}\|u_0\|_{2}^{2(p-1)/2},\ C(p) M^{(N(p-1)+p)/p} \eps^{-N(p-1)/p}\right\} \\
& \le \max \left\{ M,\ \|u_0\|_{2},\ C(p) M^{(N(p-1)+p)/p} \eps^{-N(p-1)/p}\right\},
\end{align*}
as claimed.
\end{proof}

In the one-dimensional case, we need an analogous estimate for a Sobolev norm. 
Let us first note a crucial property of the interaction kernel.  

\begin{lem}\label{N=1} 
If $K:\R\to\R$ satisfies assumptions \rf{KN1}--\rf{KN3}, then for each $v\in L^1(\R)\cap L^\infty(\R)$ it follows that 
\begin{equation} \label{jump}
(K'\ast v)_x=2\kappa_0v+k''(|\cdot|)\ast v.
\end{equation}
\end{lem}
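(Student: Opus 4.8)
The statement concerns the distributional derivative of $K'\ast v = k'(|\cdot|)\ast v$ on the line, and the point is that although $k'$ is bounded, its weak derivative picks up a delta at the origin of strength $2\kappa_0$ coming from the jump of $k'(|\cdot|)$ at $s=0$. My plan is to exploit the splitting of $k'(|s|)$ into a smooth part plus a multiple of $\operatorname{sgn}$. Concretely, assumption \eqref{KN2} gives $\kappa_0 = -\lim_{s\searrow 0}k'(s)$ finite and positive (after noting that $k'$ has a one-sided limit at $0$, which follows from \eqref{KN3}: $k'(s)=k'(1)-\int_s^1 k''(r)\,\dr$ converges as $s\searrow 0$ since $k''\in L^1(0,\infty)$). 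Then I would write, for $x\in\R$,
\begin{equation*}
(K'\ast v)(x) = \int_\R k'(|x-y|) v(y)\,\dy, \qquad k'(|s|) = -\kappa_0\,\operatorname{sgn}(s) + g(s),
\end{equation*}
where $g(s) \equiv k'(|s|) + \kappa_0\,\operatorname{sgn}(s)$ is continuous at $0$ (both one-sided limits equal zero) and, on $\R\setminus\{0\}$, differentiable with $g'(s) = k''(|s|)$. The key observation is that $g$ is in fact Lipschitz on $\R$: on $(0,\infty)$ one has $g(s) = k'(s)+\kappa_0 = \int_0^s k''(r)\,\dr$ which is absolutely continuous with derivative $k''(s)\in L^1$, and $g$ is odd, so $g\in W^{1,1}_{loc}(\R)$ with weak derivative $k''(|\cdot|)$ (no jump at $0$).

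The first step is therefore to establish this decomposition rigorously and record that $k''(|\cdot|)\in L^1(\R)$, so that $k''(|\cdot|)\ast v$ is a well-defined bounded continuous function for $v\in L^1\cap L^\infty$. The second step is to compute the two convolution pieces separately in the sense of distributions. For the smooth piece, since $g\in W^{1,1}_{loc}$ with $L^1$ weak derivative and $v\in L^1\cap L^\infty$, differentiation commutes with convolution: $(g\ast v)_x = g'\ast v = k''(|\cdot|)\ast v$ (one justifies this by testing against $\psi\in C_c^\infty(\R)$, moving the derivative onto $\psi$, using Fubini, and integrating by parts in the inner variable — the absence of a boundary term at $0$ is exactly the content of $g$ being continuous there). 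For the singular piece, $(\operatorname{sgn}\ast v)(x) = \int_\R \operatorname{sgn}(x-y)v(y)\,\dy = \int_{-\infty}^x v - \int_x^\infty v$, whose $x$-derivative is $2v$ in the distributional sense. Hence
\begin{equation*}
(K'\ast v)_x = -\kappa_0\,(\operatorname{sgn}\ast v)_x + (g\ast v)_x = -\kappa_0\cdot 2v + k''(|\cdot|)\ast v = 2\kappa_0 v + k''(|\cdot|)\ast v,
\end{equation*}
wait --- the sign: with $k'(|s|) = -\kappa_0\operatorname{sgn}(s) + g(s)$ I get $-\kappa_0\cdot 2v$, which has the wrong sign. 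Let me re-examine: near $0^+$, $k'(s)\to -\kappa_0 < 0$, and near $0^-$, $k'(|s|)=k'(-s)\to -\kappa_0$ as well, so $k'(|\cdot|)$ is \emph{continuous} across $0$ with value $-\kappa_0$ --- there is no jump from the modulus itself. The jump instead comes from differentiating $k'(|s|)$: for $s>0$ the derivative is $k''(s)$, and for $s<0$ it is $-k''(-s)$, and additionally $\frac{d}{ds}k'(|s|)$ as a distribution includes $2\kappa_0\delta_0$ precisely because the classical derivative of $|s|$ is $\operatorname{sgn}(s)$ so $\frac{d}{ds}k'(|s|)=k''(|s|)\operatorname{sgn}(s)$ classically, which has a jump of size... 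Actually the clean way is: $\frac{d}{ds}k'(|s|)$ in $\mathcal D'(\R)$ equals $k''(|\cdot|)\cdot\operatorname{sgn} + (\text{jump of }k'(|\cdot|)\text{ at }0)\cdot\delta_0$; since $k'(|\cdot|)$ has no jump, one might think the delta vanishes, but the subtlety is that $k''(|s|)\operatorname{sgn}(s)$ itself is the object whose antiderivative structure at $0$ forces the delta. The correct accounting: write $k'(|s|) = k'(0^+) + \operatorname{sgn}(s)\int_0^{|s|}k''(r)\,\dr$ --- no. The safest route for the writeup is to pair $(K'\ast v)_x$ against a test function $\psi$, split $\int k'(|x-y|)\psi'(x)\,\dx$ at $x=y$, integrate by parts on each half $\{x<y\}$ and $\{x>y\}$, and collect: the boundary terms at $x=y$ give $k'(0^+)\psi(y) - k'(0^+)\psi(y)$ from the two sides... this again cancels, so the delta must come from the direction of integration by parts combined with $\operatorname{sgn}$.

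\textbf{The main obstacle} is exactly this bookkeeping of the distributional derivative at the origin and getting the coefficient $2\kappa_0$ with the correct sign. The resolution I will commit to: one has $\partial_x\big[k'(|x-y|)\big] = \operatorname{sgn}(x-y)\,k''(|x-y|)$ for $x\ne y$, and this function, viewed in $x$, has a jump of size $\big(k''(0^+)\big)\cdot 1 - \big(-k''(0^+)\big)\cdot... $ --- no, simpler: I will instead use the representation $k'(|s|) = k'(\Lambda) - \int_{|s|}^\Lambda k''(r)\,\dr$ for $|s|\le\Lambda$, differentiate under the convolution using that $k'(|\cdot|)$ restricted away from $0$ is $W^{1,1}$, and treat the origin by a limiting argument: approximate $k'(|\cdot|)$ by $k'(|\cdot|\vee\eta)$ as $\eta\searrow 0$, compute $(k'(|\cdot|\vee\eta)\ast v)_x = (\mathbf{1}_{[\eta,\infty)}(|\cdot|)k''(|\cdot|))\operatorname{sgn}(\cdot)\ast v$, and observe that $k'(|\cdot|\vee\eta) - k'(|\cdot|) = \big(k'(\eta)-k'(|s|)\big)\mathbf 1_{|s|<\eta}$, whose derivative in $x$, convolved with $v$, contributes $\big(k'(|{-}\eta|)-k'(|\eta|)\big)$-type boundary terms at $\pm\eta$; as $\eta\to0$ these two boundary terms \emph{add} (because $\operatorname{sgn}$ flips) to give $-2\big(k'(0^+)\big)v = 2\kappa_0 v$. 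The remaining term $(\mathbf 1_{[\eta,\infty)}(|\cdot|)k''(|\cdot|)\operatorname{sgn})\ast v \to (k''(|\cdot|)\operatorname{sgn})\ast v$; but by the oddness of $\operatorname{sgn}$ against... no --- $k''(|\cdot|)\operatorname{sgn}(\cdot)$ is odd, yet it does \emph{not} integrate to give $0$ convolved with $v$ since $v$ need not be even. Hmm. Actually $k''(|\cdot|)$ is even, so $k''(|\cdot|)\operatorname{sgn}$ is odd; the claim says the answer is $k''(|\cdot|)\ast v$ with $k''(|\cdot|)$ \emph{even}. So there is a further integration by parts turning the odd kernel into the even one: $\big(k''(|\cdot|)\operatorname{sgn}\big)\ast v$ is itself not what appears; rather one more integration by parts on $(g\ast v)_x$ is not available. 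I will reconcile this in the final writeup by the direct test-function computation, being careful that $\int_\R k'(|x-y|)\psi'(x)\,\dx = \big(\int_{-\infty}^y + \int_y^\infty\big)$, integrating by parts to get $-\int k''(|x-y|)\operatorname{sgn}(x-y)\psi(x)\,\dx + [k'(0^+) - k'(0^+)]\psi(y)$ ... and then a \emph{second} manipulation using $k''\in L^1$ and $k'(\infty)$ finite. Given the intended brevity, I will present the decomposition $k'(|\cdot|) = -\kappa_0\operatorname{sgn} + g$ with $g$ Lipschitz, compute $(g\ast v)_x = g'\ast v$ honestly, note $g'(s) = k''(|s|)\operatorname{sgn}(s)\cdot\operatorname{sgn}(s) = k''(|s|)$ for $s\ne 0$ --- \emph{this is the key identity, since $g(s) = k'(|s|)+\kappa_0\operatorname{sgn}(s)$ gives $g'(s) = k''(|s|)\operatorname{sgn}(s) + 0 = \ldots$} hmm that is $k''(|s|)\operatorname{sgn}(s)$ not $k''(|s|)$. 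So $g$ is \emph{not} the right split either; I need $g(s)$ with $g'(s) = k''(|s|)$, i.e. $g(s) = \operatorname{sgn}(s)\int_0^{|s|}k''$ won't give an even... Let me just say: the honest computation is $(K'\ast v)_x = \lim_{\eta\to0}$ of the regularized version and the $2\kappa_0 v$ is a boundary/jump term while $k''(|\cdot|)\ast v$ emerges after a second integration by parts exploiting $k''\in L^1(0,\infty)$ and $k'(s)\to0$ as $s\to\infty$ (so $\int_0^\infty k''= -k'(0^+) = \kappa_0$, consistent). I expect the bulk of the work, and the only real subtlety, to be this careful treatment of the boundary terms at the origin, everything else being routine application of Fubini and dominated convergence given $v\in L^1\cap L^\infty$ and $k''\in L^1(0,\infty)$.
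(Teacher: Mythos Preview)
Your confusion has a single source, and it appears in your very first displayed equation: you write $(K'\ast v)(x)=\int_\R k'(|x-y|)\,v(y)\,\dy$, but that is not $K'$. Since $K(x)=k(|x|)$, the chain rule gives $K'(x)=k'(|x|)\,\operatorname{sgn}(x)$, an \emph{odd} function with a genuine jump at the origin: $K'(0^+)=k'(0^+)=-\kappa_0$ and $K'(0^-)=-k'(0^+)=\kappa_0$. The even function $k'(|\cdot|)$ you kept analysing is continuous at $0$, which is exactly why your boundary terms at $x=y$ kept cancelling and why every attempted decomposition led you in circles.

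Once you work with $K'=k'(|\cdot|)\operatorname{sgn}$, the argument is the one-liner the paper alludes to. Away from the origin, $K''(x)=k''(|x|)$ (even, in $L^1(\R)$ by \eqref{KN3}); the jump of $K'$ at $0$ contributes $[K'(0^+)-K'(0^-)]\,\delta_0=-2\kappa_0\delta_0$; hence $(K')'=k''(|\cdot|)-2\kappa_0\delta_0$ in $\mathcal D'(\R)$, and convolving with $v\in L^1\cap L^\infty$ gives the result. If you prefer a decomposition, the correct one is $K'(s)=-\kappa_0\operatorname{sgn}(s)+h(s)$ with $h(s)=\big(k'(|s|)+\kappa_0\big)\operatorname{sgn}(s)$; then $h$ is continuous at $0$ with $h(0)=0$, $h\in W^{1,1}(\R)$, and $h'(s)=k''(|s|)$ for $s\ne 0$, so $(h\ast v)_x=k''(|\cdot|)\ast v$ and $(-\kappa_0\operatorname{sgn}\ast v)_x=-2\kappa_0 v$.

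Note that this produces $(K'\ast v)_x=-2\kappa_0 v+k''(|\cdot|)\ast v$, i.e.\ the opposite sign on the $\kappa_0$ term to that printed in \eqref{jump}; this appears to be a typographical slip in the paper (check the example $K(x)=-|x|$, where $K'=-\operatorname{sgn}$ and $(K'\ast v)_x=-2v$ while $\kappa_0=1$). It is harmless downstream, since the only use of the lemma, in the proof of Lemma~\ref{H1upper}, bounds the resulting integral $A=\int u\,u_x^2\,\dx$ in absolute value.
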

We skip the elementary proof of this result which is related to the fact that $K'$ has a jump of size $2\kappa_0$ at the origin.

\begin{lem} \label{H1upper}
\refstepcounter{NumConstC}\label{cstC4} 
Let $u$ be a (not necessarily even) nonnegative solution to problem~\rf{agg-CP}--\rf{i-CP} with $N=1$, with  $\eps>0$ and with initial condition $u_0$ such as in \rf{PhL10}. 
Suppose moreover that $u_0\in H^1(\R)$ and  $ K'\in L^\infty(\mathbb{R})$ has the property~\eqref{jump}.
Then, the following  inequality holds true:
$$
\| u(t) \|_{\dot{H}^1} \leq \max\left\{ \| u_0 \|_{\dot{H}^1},\ C_{\ref{cstC4}} M^{5/2} \eps^{-3/2} \right\} 
\qquad
\text{for all} \quad t\geq 0.
$$
\end{lem}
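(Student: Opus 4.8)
The plan is to mimic the energy method from Lemma~\ref{Lpupper}, but now differentiating $\|u\|_{\dot H^1}^2 = \|u_x\|_2^2$ in time. Differentiating equation~\eqref{agg-CP} (with $N=1$) and testing against $u_{xx}$, or equivalently multiplying \eqref{agg-CP} by $-u_{xx}$ and integrating over $\R$, I would obtain
\begin{align*}
\frac12 \frac{\rm d}{\dt}\|u_x\|_2^2 = -\eps \|u_{xx}\|_2^2 + \int_\R u_{xx}\,\partial_x\!\left(u\,(K'\ast u)\right)\dx.
\end{align*}
Here the point of Lemma~\ref{N=1} enters: expanding $\partial_x(u\,(K'\ast u)) = u_x (K'\ast u) + u\,(K'\ast u)_x$ and using the identity~\eqref{jump}, $(K'\ast u)_x = 2\kappa_0 u + k''(|\cdot|)\ast u$, the dangerous derivative on the convolution becomes either a bounded multiplier ($2\kappa_0 u$, a genuine nonlinearity in $u$) or a convolution against $k''\in L^1$, which is controlled in $L^\infty$ by $\|k''\|_1\|u\|_\infty$ via Young's inequality. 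Thus the nonlinear term is bounded by a constant times $\|u_{xx}\|_2\,\big(\|u_x\|_2\|K'\ast u\|_\infty + \|u\|_\infty\|u_x\|_2(\kappa_0 + \|k''\|_1\|u\|_\infty)\big)$-type expressions, and crucially $\|K'\ast u\|_\infty \le \|K'\|_\infty M$ while the $L^\infty$-norms of $u$ are controlled — uniformly in time though not in $\eps$ — by Lemma~\ref{Lpupper} applied for suitably large $p$ together with a Gagliardo--Nirenberg interpolation $\|u\|_\infty \le C\|u_x\|_2^{1/2}\|u\|_1^{1/2} \le C\|u_x\|_2^{1/2}M^{1/2}$.

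Next I would package everything so that the right-hand side has the structure $\|u_{xx}\|_2\big(-\eps\|u_{xx}\|_2 + (\text{polynomial in }\|u_x\|_2,\,M)\big)$, and then convert powers of $\|u_{xx}\|_2$ on the benign side into powers of $\|u_x\|_2$ using the one-dimensional Gagliardo--Nirenberg inequality $\|u_x\|_2 \le C\|u_{xx}\|_2^{1/3}\|u\|_1^{2/3}$, i.e. $\|u_{xx}\|_2 \ge C M^{-2}\|u_x\|_2^{3}$ (using mass conservation). Substituting this lower bound for $\|u_{xx}\|_2$ in the $-\eps\|u_{xx}\|_2$ term, the differential inequality takes the form
\begin{align*}
\frac{\rm d}{\dt}\|u_x\|_2^2 \le C\|u_x\|_2^{p_1}\,\|u_{xx}\|_2\left(-\eps\, C' M^{-2}\|u_x\|_2^{3}\, (\text{correct power}) + C'' M^{\,q_1}\right),
\end{align*}
with the exponents chosen so that the bracketed factor becomes negative precisely when $\|u_x\|_2 > C_{\ref{cstC4}} M^{5/2}\eps^{-3/2}$. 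The bookkeeping of exponents is what must be done carefully to land on exactly $\eps^{-3/2}$ and $M^{5/2}$: I would track the $\eps$- and $M$-homogeneities through the chain $\|u\|_\infty \sim \|u_x\|_2^{1/2}M^{1/2}$ and $\|u_{xx}\|_2 \sim M^{-2}\|u_x\|_2^3$, noting that the threshold is determined by balancing $\eps \cdot M^{-2}\|u_x\|_2^{3}\cdot(\text{extra }\|u_x\|_2\text{-power from the }\|u\|_\infty^2\text{ term})$ against an $M$-power, which is where the scaling $\|u_x\|_2 \sim M^{5/2}\eps^{-3/2}$ comes from.

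Finally, with the sign-definiteness of the bracket above the threshold $U := \max\{\|u_0\|_{\dot H^1},\ C_{\ref{cstC4}}M^{5/2}\eps^{-3/2}\}$ established, I would invoke exactly the same continuation/barrier argument as in the proof of Lemma~\ref{Lpupper}: for $\delta>0$ let $A_\delta = \{t\ge 0 : \|u(t)\|_{\dot H^1}\le U+\delta\}$, which contains $0$ and is relatively open-ended thanks to time-continuity of $u$ in $H^1$; if $\tau_\delta := \sup\{t : [0,t]\subset A_\delta\}$ were finite, then $\|u(\tau_\delta)\|_{\dot H^1} = U+\delta > U$ forces the bracket in the differential inequality to be strictly negative, hence $\frac{\rm d}{\dt}\|u(\tau_\delta)\|_{\dot H^1}^2 < 0$, contradicting the fact that $\|u(t)\|_{\dot H^1}$ attains a local maximum at $\tau_\delta$; therefore $\tau_\delta = \infty$, and letting $\delta\to 0$ gives the claim. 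I expect the main obstacle to be purely the exponent bookkeeping in the Gagliardo--Nirenberg substitutions — making sure no $\|u_{xx}\|_2$ with a positive-sign coefficient is left over that cannot be absorbed into $-\eps\|u_{xx}\|_2^2$, and confirming that the worst nonlinear term (the one involving $\|u\|_\infty^2 = \kappa_0\|u\|_\infty \cdot \|u\|_\infty$-type contributions from $2\kappa_0 u \cdot u$) is precisely the one dictating the $\eps^{-3/2}$ rate rather than some milder term; a Young-inequality split to peel off $\eps\|u_{xx}\|_2^2$ cleanly before interpolating may be the most convenient route.
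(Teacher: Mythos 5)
Your strategy is essentially the paper's: differentiate $\|u_x\|_2^2$, use the jump identity \eqref{jump} of Lemma~\ref{N=1} so that the derivative falling on the convolution becomes $2\kappa_0 u + k''(|\cdot|)\ast u$, interpolate via Gagliardo--Nirenberg against the conserved mass $M$ only, and close with the same continuation/barrier argument as in Lemma~\ref{Lpupper}. Keeping the term $\int_\R u_{xx}\,u_x\,(K'\ast u)\dx$ and bounding $\|K'\ast u\|_\infty\le \|k'\|_\infty M$ is a legitimate variant of what the paper does (the paper integrates by parts once more so that only $(K'\ast u)_x$ and $(K'\ast u_x)_x$ appear, and interpolates everything against $M$ and $\|u\|_{\dot H^2}$, arriving at \eqref{preGN}); with the right interpolations all your pieces can indeed be bounded by $CM\|u_x\|_2\|u_{xx}\|_2$ (e.g.\ $\kappa_0\bigl|\int u_{xx}u^2\dx\bigr|\le \kappa_0\|u_{xx}\|_2\|u\|_4^2\le C\kappa_0 M\|u_x\|_2\|u_{xx}\|_2$, and similarly for the $k''$-term via Young), which is GN-equivalent to the paper's $CM^{7/5}\|u\|_{\dot H^2}^{8/5}$.

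The genuine problem is in the two interpolation inequalities you actually commit to, and they are exactly the step on which the lemma lives. Both are false, as a scaling check $u(x)\mapsto u(\lambda x)$ shows: in $\|u\|_\infty\le C\|u_x\|_2^{1/2}\|u\|_1^{1/2}$ the left side is scale-invariant while the right side scales like $\lambda^{-1/4}$ (the correct inequality is $\|u\|_\infty\le C\|u_x\|_2^{2/3}\|u\|_1^{1/3}$), and $\|u_x\|_2\le C\|u_{xx}\|_2^{1/3}\|u\|_1^{2/3}$ fails similarly; the correct exponents are $\|u_x\|_2\le C\|u_{xx}\|_2^{3/5}M^{2/5}$, i.e.\ $\|u_{xx}\|_2\ge cM^{-2/3}\|u_x\|_2^{5/3}$, which is precisely \eqref{GNH1}. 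This is not cosmetic: with your substitution $\|u_{xx}\|_2\gtrsim M^{-2}\|u_x\|_2^{3}$ the bracket $-\eps M^{-2}\|u_x\|_2^{3}+CM\|u_x\|_2$ turns negative already for $\|u_x\|_2\gtrsim M^{3/2}\eps^{-1/2}$, so the announced bookkeeping does not land on $M^{5/2}\eps^{-3/2}$, and since the inequality it rests on is false, that stronger threshold is unjustified. With the correct GN the bracket $-c\eps M^{-2/3}\|u_x\|_2^{5/3}+CM\|u_x\|_2$ is negative exactly when $\|u_x\|_2> C' M^{5/2}\eps^{-3/2}$, which recovers the statement and matches the paper's computation in the $(M,\|u\|_{\dot H^2})$ variables. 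Finally, avoid the alternative of taking $\|u\|_\infty$ from Lemma~\ref{Lpupper}: that lemma is stated only for $p<\infty$ and its bound carries $\|u_0\|_{\max\{2,p\}}$ inside the maximum, which would contaminate the threshold with $u_0$-dependence beyond $\|u_0\|_{\dot H^1}$ and yield a weaker conclusion than the one claimed; the self-contained interpolation against $M$ and the derivative norms is the right choice.
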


\begin{proof}
Integrating by parts and using Lemma \ref{N=1} we obtain
\begin{align*}
& \frac{1}{2} \frac{\rm d}{\dt} \Vert u \Vert^2_{\dot{H}^1} = -\eps \Vert u \Vert^2_{\dot{H}^{2}}
-\int_{\R}{u_x(u K' \ast u)_{xx} \dx} \\
&= -\eps \Vert u \Vert^2_{\dot{H}^{2}}-\int_{\R}{u_x u_{xx} (K' \ast u) \dx}-2 \int_{\R}{u_x^2 (K' \ast u)_{x} \dx}
- \int_{\R}{uu_x (K' \ast u_x)_{x} \dx}\\
&= -\eps \Vert u \Vert^2_{\dot{H}^{2}}-\int_{\R}{uu_x (K' \ast u_x)_{x} \dx}-\frac{3}{2} \int_{\R}{u_x^2 (K' \ast u)_{x} \dx}
\\
&=-\eps \Vert u \Vert^2_{\dot{H}^{2}}-5\kappa_0 \underbrace{\int_{\R}{uu_x^2 \dx}}_A - \underbrace{\int_{\R}{uu_x( k''(|\cdot|) \ast u_x) \dx}}_B
-\frac{3}{2} \underbrace{\int_{\R}{u^2_x( k''(|\cdot|) \ast u) \dx}}_E.
\end{align*}
Using the  H{\"o}lder and the Gagliardo--Nirenberg  inequalities, we get
\begin{align*}
|A| &\leq M \Vert u \Vert_{\dot{W}^{1,\infty}}^2 \leq CM \left(M^{1/5} \Vert u \Vert^{4/5}_{\dot{H}^{2}}\right)^2 \leq C M^{7/5} \Vert u \Vert^{8/5}_{\dot{H}^{2}},
\end{align*}
and then applying,  moreover,  the Young inequality,
\begin{align*}
|B| &\leq  \Vert u \Vert_{2} \Vert u \Vert_{\dot{H}^{1}} \Vert  k''(|\cdot|) \Vert_{1} \Vert u_x \Vert_{\infty}\\
& \leq C \left (M^{4/5} \Vert u \Vert^{1/5}_{\dot{H}^{2}}\right)  \left(M^{2/5} \Vert u \Vert^{3/5}_{\dot{H}^{2}}\right) 
\left(M^{1/5} \Vert u \Vert^{4/5}_{\dot{H}^{2}}\right) \\
&\leq C M^{7/5} \Vert u \Vert^{8/5}_{\dot{H}^{2}},
\end{align*}
and
\begin{align*}
|E| &\leq  \Vert u \Vert^2_{\dot{H}^{1}} \Vert k''(|\cdot|) \Vert_{1} \Vert u \Vert_{\infty}  \leq C \left(M^{2/5} \Vert u \Vert^{3/5}_{\dot{H}^{2}}\right) ^2
\left(M^{3/5} \Vert u \Vert^{2/5}_{\dot{H}^{2}}\right) \leq C M^{7/5} \Vert u \Vert^{8/5}_{\dot{H}^{2}}.
\end{align*}
Consequently, we obtain 
\begin{align} \label{preGN}
& \frac{1}{2} \frac{\mathrm{d}}{\mathrm{d}t} \|u\|_{\dot{H}^1}^2 \leq - \eps \|u\|_{\dot{H}^{2}}^2 
+ C M^{7/5} \Vert u \Vert^{8/5}_{\dot{H}^{2}}.
\end{align}
By the  Gagliardo--Nirenberg inequality, we have $\|u\|_{\dot{H}^1} \leq C M^{2/5} \Vert u \Vert^{3/5}_{\dot{H}^{2}}$, which implies that
\begin{align} \label{GNH1}
& \|u\|_{\dot{H}^2} \geq C \|u\|_{\dot{H}^1}^{5/3} M^{-2/3}.
\end{align}

Consequently, inequalities \eqref{preGN}--\eqref{GNH1} yield
\begin{align*} 
\frac{1}{2} \frac{\mathrm{d}}{\mathrm{d}t} \|u\|_{\dot{H}^1}^2 & \leq \Vert u \Vert^{8/5}_{\dot{H}^{2}}
\left(-C \eps \|u\|_{\dot{H}^1}^{2/3} M^{-4/15} + C M^{7/5}\right).
\\
& \leq C \Vert u \Vert^{8/5}_{\dot{H}^{2}} M^{7/5} \left( -C \eps \|u\|_{\dot{H}^1}^{2/3} M^{-5/3}+1\right). 
\end{align*} 
Now we conclude as for the proof of Lemma~\ref{Lpupper}.
\end{proof}

%%%%%%%%%%%%%%%%%%%%%%%%%%%%%%%%%%% 

\section{Proofs of main results} \label{sec:proof}

Now we prove the results stated in Section~\ref{sec:mr}

\begin{proof}[Proof of Theorem~\ref{conc}]
\noindent\textit{The case $N\ge 2$.}
We recall that $\Lambda$ is a number satisfying the assumption \rf{mu-Lambda}, which allows us to use Theorem~\ref{thm:concentration}. 
Below, we consider a parameter $\lambda>0$; its value will be specified later.
\\
For arbitrary $T>0$, using twice the H\"older  inequality, 
we get
\begin{equation} \label{PhL20}
\begin{split}
& \int_0^T \int_{B_{\lambda \varepsilon}} \frac{u(x,t)}{|x|} \dx \dt 
\\ 
& \quad \le \left( \int_0^T \int_{B_{\lambda \varepsilon}} \frac{1}{|x|^{(2N-1)/2}} \dx \dt \right)^{2/(2N-1)} \\
& \qquad\qquad \times \left(\int_0^T \int_{B_{\lambda \varepsilon}} u(x,t)^{(2N-1)/(2N-3)} \dx \dt \right)^{(2N-3)/(2N-1)}
\\ 
& \quad \le \left(2 \sigma_N T \sqrt{\lambda \varepsilon} \right)^{2/(2N-1)} \left(\int_0^T \int_{B_{\lambda \varepsilon}} u(x,t)^{(2N+1)/(2N-3)} \dx \dt \right)^{(2N-3)/(4N-2)}
\\ 
& \qquad\qquad \times \left(\int_0^T \int_{B_{\lambda \varepsilon}} u(x,t) \dx \dt \right)^{(2N-3)/(4N-2)}.  
\end{split} 
\end{equation}
Hence, there exists $\eps_\ast>0$ depending on $N$, $M$, and $u_0$ such that for $0<\eps \leq \eps_\ast$, by the $L^p$-estimates in Lemma~\ref{Lpupper},  
\begin{equation}
\begin{split} 
 \int_0^T &\int_{B_{\lambda \varepsilon}} \frac{u(x,t)}{|x|} \dx \dt 
\\ 
& \le \left(\sigma_N T \sqrt{\lambda \varepsilon}\right)^{2/(2N-1)} T^{(2N-3)/(4N-2)} 
\left(\sup_{t \in [0,T]}{\Vert u(t) \Vert_{(2N+1)/(2N-3)}}\right)^{(2N+1)/(4N-2)} 
\\ 
& \qquad\qquad \times \left(\int_0^T \int_{B_{\lambda \varepsilon}} u(x,t) \dx \dt \right)^{(2N-3)/(4N-2)} 
\\ 
& \le C T^{(2N+1)/(4N-2)} \lambda^{1/(2N-1)} \varepsilon^{1/(2N-1)} \varepsilon^{-4N(2N+1)/(4N-2)(2N+1)} 
\\ 
& \qquad\qquad \times \left(\int_0^T \int_{B_{\lambda \varepsilon}} u(x,t) \dx \dt \right)^{(2N-3)/(4N-2)} 
\\ 
& = C T^{(2N+1)/(4N-2)} \lambda^{1/(2N-1)} \varepsilon^{-1}  \left(\int_0^T \int_{B_{\lambda \varepsilon}} u(x,t) \dx \dt \right)^{(2N-3)/(4N-2)} . 
\end{split}
\end{equation} 
On the other hand, by the mass conservation \rf{m-cons},  
\begin{align}
\int_0^T \int_{B_{3\Lambda/2}\setminus B_{\lambda\varepsilon}} \frac{u(x,t)}{|x|} \dx \dt & \le \frac{1}{\lambda\varepsilon} \int_0^T \int_{B_{3\Lambda/2}\setminus B_{\lambda\varepsilon}} u(x,t) \dx \dt \nonumber \\ 
& \le \frac{1}{\lambda\varepsilon} \int_0^T \|u(t)\|_1\dt = \frac{MT}{\lambda\varepsilon}. \label{PhL21}
\end{align}
Recalling the definition of the quantity $\DD_\Lambda(u)$ in \eqref{D}, we deduce from inequalities  \eqref{PhL20} and \eqref{PhL21} that
\begin{equation}  \label{PhL22}
\begin{split}
& \frac{\varepsilon}{N-1} \int_0^T \DD_\Lambda(u(t)) \dt \\
& \quad \le C T^{(2N+1)/(4N-2)} \lambda^{1/(2N-1)} \left(\int_0^T \int_{B_{\lambda \varepsilon}} u(x,t) \dx \dt \right)^{(2N-3)/(4N-2)} + \frac{MT}{\lambda}.
\end{split}
\end{equation}
Next, we  infer from Theorem~\ref{thm:concentration}, the positivity of $\omega_\Lambda$, and inequality~\eqref{PhL22} with $T=T_\Lambda$ that
\begin{equation*}
\frac{\Lambda \mathcal{L}_\Lambda}{N-1} - \frac{MT_\Lambda}{\lambda} \le  C T_\Lambda^{(2N+1)/(4N-2)} \lambda^{1/(2N-1)}
\left(\int_0^{T_\Lambda} \int_{B_{\lambda \varepsilon}} u(x,t) \dx \dt \right)^{(2N-3)/(4N-2)} .
\end{equation*} 
Finally, we choose
\begin{equation}
\lambda \equiv \frac{2(N-1)MT_{\Lambda}}{\Lambda \cL_{\Lambda}} \label{PhL23}
\end{equation} 
to complete the proof of inequality~\eqref{concen} when $N\ge 2$.
\bigskip

\textit{The case $N=1$.} Again, we recall that $\Lambda$ is a number satisfying the assumption \rf{mu-Lambda}, which allows us to use Theorem~\ref{thm:concentration} and we consider a parameter $\lambda>0$ to be specified later. It follows from Theorem~\ref{thm:concentration} and the Cauchy--Schwarz inequality that 
\begin{align*}
\int_0^{T_\Lambda} \int_{-\lambda\varepsilon}^{\lambda\varepsilon} u(x,t) \dx \dt & = \int_0^{T_\Lambda} \int_{-\lambda\varepsilon}^{\lambda\varepsilon} \left( u(0,t) + \int_0^x u_y(y,t) \dy \right) \dx\dt \\
& \ge \int_0^{T_\Lambda} \left( 2 \lambda\varepsilon u(0,t) - 2 (\lambda\varepsilon)^{3/2} \|u_x(t)\|_2 \right) \dt \\
& = \lambda\varepsilon \left( \int_0^{T_\Lambda} \DD_\Lambda(u(t)) \dt - 2 T_\Lambda \sqrt{\lambda\varepsilon} \sup_{t\in [0,T_\Lambda]} \|u(t)\|_{\dot{H}^1}  \right) \\
& \ge \lambda \left( \Lambda \cL_\Lambda - 2 T_\Lambda \sqrt{\lambda} \varepsilon^{3/2} \sup_{t\in [0,T_\Lambda]} \|u(t)\|_{\dot{H}^1} \right).
\end{align*}
We infer from Lemma~\ref{H1upper} that there exists $\eps_\ast>0$ such that for $0<\eps \leq \eps_\ast$, $\|u(t)\|_{\dot H^1}\le C_1M^{5/2}\eps^{-3/2}$, and the above inequality leads to 
\begin{equation*}
\int_0^{T_\Lambda} \int_{-\lambda\varepsilon}^{\lambda\varepsilon} u(x,t) \dx \dt \ge \lambda \left( \Lambda  \cL_{\Lambda} - 2C_1T_\Lambda M^{5/2} \lambda^{1/2} \right).
\end{equation*}
We then complete the proof by choosing $\lambda \equiv ( \Lambda \cL_\Lambda/(4C_1 M^{5/2} T_\Lambda))^2$. 
\end{proof}

%%%%%%%%%%%%%%%%%%%%%%%%%%%%%%%%%%%

Next, we prove the lower estimates for Lebesgue norms.

\begin{proof}[Proof of Corollary~\ref{c2}]
Let $p\in [1,\infty)$. By the H{\"o}lder inequality, Theorem~\ref{conc}, and Lemma~\ref{Lpupper}, 
\begin{align*}
C_* & \le \int_0^{T_*} \int_{B_{\lambda\varepsilon}} u(x,t) \dx \dt \\ 
&  \le \Big( \frac{\sigma_N}{N} \Big)^{\frac{p-1}{p}} (\lambda \varepsilon)^{\frac{N(p-1)}p} \int_0^{T_*} \left( \int_{B_{\lambda\varepsilon}} u(x,t)^p\dx \right)^{1/p} \dt,
\end{align*} 
which proves Corollary~\ref{c2} with $C_{**}(p)=C_* \Big( \frac{\sigma_N}{N} \Big)^{-\frac{p-1}p} \lambda^{-\frac{N(p-1)}p}.
$ 
\end{proof}

%%%%%%%%%%%%%%%%%%%%%%%%%%%%%%%%%%%%

\section*{Acknowledgements}
We thank  Mi{\l}osz Krupski for interesting comments. 
This work was partially supported by the French-Polish PHC Polonium grant 40592NJ, and NCN 2016/23/B/ST1/00434 (the first named author). 

%\newpage

\bibliographystyle{siam}
\bibliography{BBKL-biblio}

\end{document}